\newenvironment{enumeratei}{\begin{enumerate}[\upshape (i)]}{\end{enumerate}}
\numberwithin{equation}{section}
\theoremstyle{plain}
 \newtheorem{theorem}{Theorem}[section]
 \newtheorem{lemma}[theorem]{Lemma}
 \newtheorem{proposition}[theorem]{Proposition}
 \newtheorem{corollary}[theorem]{Corollary}
\theoremstyle{definition}
 \newtheorem{definition}[theorem]{Definition}
 \newtheorem{remark}[theorem]{Remark}
\newcommand \datum {June 29, 2017}
\newcommand \pcrely {principal congruence representability}
\newcommand \szn {\textup{col}}
\renewcommand\rho{\varrho}
\newcommand\length{\textup{length}}
\newcommand\inp{\mathfrak p}
\newcommand\inh{\mathfrak h}
\newcommand\inq{\mathfrak q}
\newcommand\inr{\mathfrak r}
\newcommand\ins{\mathfrak s}
\newcommand\kulse[2] {e(#1,#2)}
\newcommand\belse[2] {i(#1,#2)}
\newcommand\kulsh[2] {E(#1,#2)}
\newcommand\belsh[2] {I(#1,#2)}
\newcommand \symi [2] {[#1,#2]^\ast}
\newcommand \Con  {\textup{Con}}
\newcommand \Prime  {\textup{Prime}}
\newcommand \erep  {\textup{erep}}
\newcommand \srep  {\textup{SRep}}
\newcommand \hei {\mathfrak h}
\newcommand\ideal[1]{\mathord\downarrow #1}
\newcommand\filter[1]{\mathord\uparrow #1}
\newcommand \tuple [1] {\langle #1\rangle}
\newcommand \pair [2] {\tuple{#1,#2}}
\newcommand \tripl [3] {\tuple{#1,#2,#3}}
\newcommand \tbf [1] {\textbf{#1}} 
\newcommand \set[1] {\{#1\}}
\newcommand \jj {\vee}
\newcommand \Princ {\textup{Princ}}
\newcommand \Intv {\textup{Intv}}
\newcommand \con {\textup{con}}
\newcommand \lbound {\textup{Bnd}_{\textup{left}}}
\newcommand \rbound {\textup{Bnd}_{\textup{right}}}
\newcommand \ljsp {\textup{ljs}}
\newcommand \rjsp {\textup{rjs}}
\newcommand \flrep{\textup{(f\kern0.7pt L\kern -0.7pt)}}
\newcommand \alrep{\textup{(L\kern -0.7pt)}}
\newcommand \aarep{\textup{(A)}}
\newcommand \aaerep{\textup{(A1)}}
\renewcommand\phi{\varphi}
\renewcommand\epsilon{{\boldsymbol\varepsilon}}
\newcommand\balpha{{\boldsymbol\alpha}}
\newcommand\bbeta{{\boldsymbol\beta}}
\newcommand\bgamma{{\boldsymbol\gamma}}
\newcommand\bdelta{{\boldsymbol\delta}}
\newcommand\bmu{{\boldsymbol\mu}}
\newcommand\alga {\mathfrak A}
\newcommand\semmi [1] {}
\begin{document}
\title
[Principal congruences in a distributive congruence lattice]
{On the set of principal congruences in a distributive congruence lattice of an algebra}

\author[G.\ Cz\'edli]{G\'abor Cz\'edli}
\email{czedli@math.u-szeged.hu}
\urladdr{http://www.math.u-szeged.hu/\textasciitilde{}czedli/}
\address{University of Szeged\\ Bolyai Institute\\Szeged,
Aradi v\'ertan\'uk tere 1\\ Hungary 6720}

\thanks{This research was supported by
NFSR of Hungary (OTKA), grant number K 115518}

\begin{abstract} 
Let $Q$ be a subset of a finite distributive lattice $D$. An algebra $A$ \emph{represents the inclusion $Q\subseteq D$ by principal congruences} if  the congruence lattice of $A$ is isomorphic to $D$ and  the ordered set of principal congruences of $A$ corresponds to $Q$ under this isomorphism. If there is such an algebra for \emph{every} subset $Q$ containing $0$, $1$, and all join-irreducible elements of $D$, then $D$ is said to be \emph{fully \aaerep-rep\-res\-entable}.
We prove that every fully \aaerep-representable finite distributive lattice  is planar and it has at most one join-reducible coatom. Conversely, we prove that every finite planar distributive lattice with at most one join-reducible coatom  is \emph{fully chain-representable} in the sense of a recent paper of G.\ Gr\"atzer. Combining the results of this paper with another paper by the present author, it follows that every fully \aaerep-representable  finite distributive lattice  is ``fully representable'' even by principal congruences of \emph{finite lattices}. Finally, we prove that every \emph{chain-representable} inclusion $Q\subseteq D$ can be represented by the principal congruences of a finite (and quite small) algebra.
\end{abstract}

\subjclass {06B10{{\hfill \color{red}
\datum{}\quad {\tiny(Novelty: Prop.~\ref{propositionbchrsnlsgR}, Sect.~\ref{sectionSnglPrf})}\color{black}}}}
\keywords{Distributive lattice, principal lattice congruence, congruence lattice,  chain-representability}

\maketitle
\section{Introduction and results}
Gr\"atzer~ \cite[Probl.\ 12]{gG13} and \cite[Probl.\ 22.1]{ggCFL2} raised the problem of 
characterizing lattices and their subsets that can be represented simultaneously as  congruence lattices and the sets of principal congruences, respectively, of algebras or lattices. The first steps in this direction were made by Gr\"atzer~\cite{ggwith1} and Gr\"atzer and Lakser~\cite{gratzerlakser}; here we continue their investigations.
For a finite lattice $L$,  $J(L)$ denotes the ordered set of nonzero join-irreducible elements of $L$, $J_0(L)$ stands for $J(L)\cup\set0$, and we let $J^+(L)=J(L)\cup\set{0,1}$. For an algebra $A$, let $\Con(A)$ be the \emph{congruence lattice} of $A$, while $\Princ(A)$ will stand for the ordered set of \emph{principal congruences} of $A$. The algebra $A$ can be infinite but we always assume that $\Con(A)$ is finite. Then every congruence of $A$ is the join of finitely many principal congruences, whereby 
\begin{equation}
J_0(\Con(A)) \subseteq \Princ (A)\subseteq \Con(A).
\label{eqAtgjWrSD}
\end{equation}
For a subset $Q$ of a finite lattice $D$,  an algebra $A$ \emph{represents the inclusion}  $Q\subseteq D$ \emph{by principal congruences}  if there exists an isomorphism $\phi\colon \Con(A)\to D$ such that $Q=\phi(\Princ(A))$. In this case, we also say that the \emph{inclusion} $Q\subseteq D$ is \emph{represented by the principal congruences of} $A$. Mostly, we consider only the case where $D$ is distributive. Our first aim is to prove the following statement; condition \eqref{eqKzGtrW} in it is motivated by \eqref{eqAtgjWrSD}. Note that this section does not contain proofs; they are given in the subsequent sections. 

\begin{proposition}\label{propdhmhZ} Let $D$ be a finite distributive lattice. Then the following two conditions are equivalent.
\begin{enumerate}[\upshape(a)]
\item For every set $Q$ such that
\begin{equation} J_0(D)\subseteq Q\subseteq D,
\label{eqKzGtrW}
\end{equation}
the inclusion $Q\subseteq D$ is represented by the principal congruences of some algebra~$A$. (This condition will be called \emph{full \aarep-representability}.)
\item The lattice $D$  is a planar and  $1_D\in J_0(D)$, that is, $|D|=1$ or $D$ has exactly one coatom.
\end{enumerate}
\end{proposition}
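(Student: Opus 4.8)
The plan is to prove the two implications of Proposition~\ref{propdhmhZ} separately, treating (a)$\Rightarrow$(b) as the ``negative'' direction and (b)$\Rightarrow$(a) as the ``positive'' (constructive) direction.

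\textbf{From (a) to (b).}
Here I would argue contrapositively: if $D$ is non-planar, or if $D$ is planar but has two or more coatoms (equivalently $1_D\notin J_0(D)$), then I must exhibit a set $Q$ with $J_0(D)\subseteq Q\subseteq D$ that \emph{cannot} be represented by the principal congruences of any algebra $A$. The natural candidate in both cases is $Q=J_0(D)$ itself (or $J^+(D)$), forcing $\Princ(A)=J_0(\Con A)$, i.e.\ \emph{every} principal congruence is join-irreducible. The key structural fact to invoke is the classical monotonicity/additivity of the map $\pair a b\mapsto \con(a,b)$ together with the fact that in any algebra the relation ``$\con(a,b)\le\con(c,d)$'' on pairs has to be compatible with the lattice operations; more concretely, I would use the known necessary conditions from Gr\"atzer--Lakser~\cite{gratzerlakser} and Gr\"atzer~\cite{ggwith1} on which subsets of a finite distributive lattice can arise as $\Princ(A)$ — in particular the principle that $\Princ(A)$ must be the union of $J_0$ with a ``connected'' family and must satisfy a certain reflexivity/convexity condition coming from the fact that if $\alpha\jj\beta\in\Princ(A)$ and the join is ``irredundant'' then something can be said about $\alpha,\beta$. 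The technical heart is to show that if $D$ has two distinct coatoms $c_1,c_2$, then the top $1=c_1\jj c_2$ would be forced into $\Princ(A)$ whenever $\Princ(A)$ is required to contain $J_0(D)$ but the surrounding combinatorics (planarity-type obstruction on the interval structure) prevents consistent assignment of principal-congruence status to the remaining elements; and similarly a non-planar $D$ contains one of the forbidden configurations whose join-irreducibles cannot be simultaneously realized as principal congruences with the rest non-principal.

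\textbf{From (b) to (a).}
This is the direction I expect to be carried out by a direct construction, and it is where the statement connects to Gr\"atzer's notion of \emph{(fully) chain-representability}. Given a finite planar distributive lattice $D$ with at most one join-reducible coatom, and an arbitrary $Q$ with $J_0(D)\subseteq Q\subseteq D$, I would: (1) exploit planarity to fix a planar diagram and hence a left-right (``first'') normal form for elements, so that every element of $D$ is a join of join-irreducibles in a canonical, linearly structured way; (2) build $A$ as a lattice (or a small algebra) whose congruence lattice is $D$ — for this one uses the standard toolkit (e.g.\ realizing $D$ as $\Con$ of a finite lattice via Dilworth/Gr\"atzer--Schmidt-type constructions adapted to the planar case), and then \emph{decorates} the construction with additional generators/elements so that, for each $q\in Q\setminus J_0(D)$, a pair of elements is inserted whose congruence is exactly $q$, while for each join-reducible $d\in D\setminus Q$ one verifies that no single pair can generate $d$; (3) check that the ordered set of principal congruences obtained is precisely $Q$. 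The announced link to ``fully chain-representable in the sense of Gr\"atzer'' suggests the cleanest route is to first prove that such $D$ is fully chain-representable and then cite/transfer that via the last sentence of the abstract (the companion-paper result) to get full \aarep-representability; I would structure the proof that way, so the real work reduces to the chain-representability of planar distributive lattices with $\le 1$ join-reducible coatom.

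\textbf{Main obstacle.}
I expect the genuine difficulty to lie in the (a)$\Rightarrow$(b) direction, specifically in proving the non-planarity obstruction: it is easy to see why two join-reducible coatoms cause trouble (the top is a ``forced'' join and its two coatoms, being themselves possibly join-reducible, create an inconsistency in declaring principal-congruence membership), but ruling out \emph{all} non-planar finite distributive lattices requires identifying a finite list of forbidden sublattices/configurations (the distributive analogue of the $\mathsf C_2^3$ cube or the ``hat'' configurations) and showing each one obstructs representability of $Q=J_0(D)$. Establishing that this finite list is exhaustive, and that each member really is an obstruction — i.e.\ that the combinatorial constraints on $\Princ(A)\subseteq\Con(A)$ genuinely fail — is the step that will demand the most care; the constructive direction, by contrast, should go through by assembling known gadgets once the planar normal form is in place.
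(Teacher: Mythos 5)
Your direction (b)$\Rightarrow$(a) is essentially the paper's route: since $1_D\in J(D)$ forces $J_0(D)=J^+(D)$, one reduces to chain-representability (Proposition~\ref{propcLzGb}) and then invokes the transfer theorem of Gr\"atzer recalled in Theorem~\ref{thmsofar}\eqref{thmsofare}, which applies precisely because $1_D\in J(D)$; no additional ``decorated'' construction is needed.

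The genuine gap is in (a)$\Rightarrow$(b), and it is twofold. First, your proposed witness $Q=J_0(D)$ is not the one that works. For the non-planarity obstruction the paper picks a three-element antichain $\set{\balpha,\bbeta,\bgamma}\subseteq J(D)$ and uses $Q=J^+(D)\cup\set{\bmu}$ with $\bmu=\balpha\jj\bbeta\jj\bgamma$: the whole argument hinges on $\bmu$ \emph{being} principal, so that one can write $\bmu=\con(a,b)$, take a shortest sequence $a=x_0,\dots,x_n=b$ whose consecutive pairs lie in $\balpha\cup\bbeta\cup\bgamma$, and reach a contradiction because $\con(x_{j-1},x_{j+1})$, being a principal congruence lying below $\balpha\jj\bbeta<\bmu$ and hence forced into $J(D)$ by the choice of $Q$, allows the sequence to be shortened. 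With $Q=J_0(D)$ the element $\bmu$ need not be principal and this argument never starts. For the coatom obstruction the paper uses $Q=D\setminus\set{1_D}$: then $1_{\Con(A)}=\balpha\cup\bbeta$ as a set-theoretic union of the two coatoms, and a block-plus-transitivity argument yields $\balpha\leq\bbeta$, a contradiction. Second, the tools you propose to import are not available at the level of generality required: directedness of $\Princ$ is a theorem about \emph{lattices} and fails for arbitrary algebras (indeed, Corollary~\ref{crollarczhTngPr} is deduced \emph{from} this proposition, so it cannot be an input to it), and there is no ready-made ``finite list of forbidden configurations'' for $\Princ(A)$ of an algebra to cite. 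The paper's proof is a self-contained computation with congruences of an arbitrary algebra (shortest connecting sequences for planarity, congruence blocks for the single-coatom condition), and without an argument of that kind your sketch does not close.
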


In connection with this statement, see also Corollary~\ref{corolsrpBngN} later. 

A finite lattice  $D$ will be called \emph{fully \aarep-representable} if it satisfies 
condition (a) from Proposition~\ref{propdhmhZ}.  The notation \aarep{} comes from ``\emph{algebra}''. 

\begin{corollary}\label{crollarczhTngPr}
Let $V=\set{0,\balpha,\bbeta}$ be the ``V-shaped'' three-element ordered set with smallest element 0 and maximal elements $\balpha$ and $\bbeta$. Then $\Princ(A)\cong V$ holds for no algebra $A$.
\end{corollary}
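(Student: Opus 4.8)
The plan is to argue by contradiction. Suppose some algebra $A$ satisfies $\Princ(A)\cong V$; the strategy is to pin $\Con(A)$ down completely and then to recognize the resulting situation as one already excluded by Proposition~\ref{propdhmhZ}. First I would use \eqref{eqAtgjWrSD}: since $J_0(\Con(A))\subseteq\Princ(A)\cong V$, the finite lattice $\Con(A)$ has at most two nonzero join-irreducible elements. A finite lattice with at most two nonzero join-irreducibles is, up to isomorphism, the one-element lattice, the two-element chain, the three-element chain, or the four-element Boolean lattice $\mathbf 2^2$ — a short exercise, based on the fact that every element is the join of the join-irreducibles below it. In the first three cases $\Con(A)$ is a chain, so $J_0(\Con(A))=\Con(A)$, and then \eqref{eqAtgjWrSD} forces $\Princ(A)=\Con(A)$, a chain, which is not isomorphic to $V$. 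Hence $\Con(A)\cong\mathbf 2^2$.

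Next, $J_0(\mathbf 2^2)$ consists of $0$ and the two atoms, a three-element ordered set isomorphic to $V$. Transporting $\Princ(A)$ along an isomorphism $\Con(A)\to\mathbf 2^2$ gives a set $Q$ with $J_0(\mathbf 2^2)\subseteq Q\subseteq\mathbf 2^2$ and $|Q|=|V|=3$, hence $Q=J_0(\mathbf 2^2)$; in other words, $A$ represents the inclusion $J_0(\mathbf 2^2)\subseteq\mathbf 2^2$ by principal congruences. But $\mathbf 2^2$ is planar and has two coatoms, so $1\notin J_0(\mathbf 2^2)$ and condition~(b) of Proposition~\ref{propdhmhZ} fails; by that proposition there is therefore a set $Q'$ with $J_0(\mathbf 2^2)\subseteq Q'\subseteq\mathbf 2^2$ that is represented by the principal congruences of no algebra. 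There are exactly two candidates, $J_0(\mathbf 2^2)$ and $\mathbf 2^2$ itself, and the second one is representable: if $A$ is $\mathbf 2^2$ regarded as a lattice (the direct square of the two-element chain), then $\Con(A)\cong\mathbf 2^2$ and all four congruences — $\Delta$, the two atoms $\con(0,a)$ and $\con(0,b)$, and $\nabla=\con(0,1)$ — are principal. Hence $Q'=J_0(\mathbf 2^2)$, contradicting the previous sentence. This contradiction proves the corollary.

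I do not expect a genuinely hard step here. The two computational points — the classification of finite lattices with at most two nonzero join-irreducibles, and the verification that $\mathbf 2^2$, as a lattice, has every congruence principal — are routine. The one place that calls for care is that Proposition~\ref{propdhmhZ} by itself only guarantees \emph{some} bad admissible set $Q'$; to conclude that the bad one is precisely $J_0(\mathbf 2^2)$, one must separately check that the full set $\mathbf 2^2$ is good, which is exactly what the lattice $\mathbf 2^2$ supplies. Everything else is bookkeeping with \eqref{eqAtgjWrSD} and a single application of Proposition~\ref{propdhmhZ}.
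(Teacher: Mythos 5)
Your proof is correct, and its skeleton coincides with the paper's: reduce to the case $\Con(A)\cong\mathbf 2^2$ with $\Princ(A)$ corresponding to $J_0(\mathbf 2^2)$, then derive a contradiction from Proposition~\ref{propdhmhZ}. (The paper obtains $\Con(A)\cong\mathbf 2^2$ in one line from the fact that every congruence is the join of the principal congruences below it; your detour through the classification of finite lattices with at most two nonzero join-irreducible elements reaches the same conclusion correctly.) The one genuine difference lies in the final step. The paper does not invoke the \emph{statement} of Proposition~\ref{propdhmhZ}; it cites the specific argument beginning at \eqref{eqdkbhkzHbW} inside that proposition's proof, which shows directly that for a two-coatom $D$ the inclusion $D\setminus\set{1_D}\subseteq D$ is not representable --- and for $D=\mathbf 2^2$ this set is exactly your $Q=J_0(\mathbf 2^2)$. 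You instead use the proposition as a black box, which only guarantees \emph{some} non-representable admissible subset, and you must then eliminate the other candidate by exhibiting the lattice $\mathbf 2\times\mathbf 2$, all four of whose congruences are principal. You correctly identified this as the point needing care, and your verification is sound; the net effect is a slightly longer but valid route to the same contradiction.
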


It has previously been known that $V$ cannot be represented as $\Princ(L)$ of a \emph{lattice} $L$, since we know from Gr\"atzer~\cite{gG13}, see also (1.1) in Cz\'edli~\cite{czgaleph0}, that $\Princ(L)$ is always a directed ordered set but $V$ is not.
Corollary~\ref{crollarczhTngPr} indicates why it would be difficult to extend the results of 
Cz\'edli~\cite{czgonemap}, \cite{czgaleph0}, \cite{czgprincout}, \cite{czgmanymaps}, and \cite{czgcometic} and Gr\"atzer~\cite{gG13}, \cite{gGprincII}, and \cite{gGprincIII} from
the representability of ordered sets by  principal \emph{lattice} congruences to that by arbitrary principal congruences.

Using \eqref{eqAtgjWrSD} and  that $\Princ(L)$ is a directed ordered set, if $\Con(L)$ is finite, then $J_0(\Con(L))$ has an upper bound in  $\Princ(L)$. This upper bound  is necessarily the top element of $\Con(L)$. Hence, for every lattice $L$ such that $\Con(L)$ is finite,
\begin{equation}
J^+(\Con(L)) \subseteq \Princ (L)\subseteq \Con(L).
\label{eqLtgjWrSD}
\end{equation}
Our main goal is to prove the following theorem;  \eqref{eqTzhjP} is motivated by \eqref{eqLtgjWrSD}.

\begin{theorem}\label{thmmain} Let $D$ be a finite distributive lattice, and consider the following three conditions on $D$.
\begin{enumeratei}
\item\label{thmmaina} For every $Q$ satisfying
\begin{equation} J^+(D)\subseteq Q\subseteq D,
\label{eqTzhjP}
\end{equation}
the inclusion $Q\subseteq D$ is represented by the principal congruences of some \emph{algebra} $A$; if this condition holds then
$D$ is said to be \emph{fully $\aaerep$-representable}. 
\item\label{thmmainc}
For every $Q$ satisfying  \eqref{eqTzhjP},
the inclusion $Q\subseteq D$ is represented by the principal congruences of some \emph{finite lattice} $L$;  
if this condition holds then $D$ is said to be \emph{fully $\flrep$-representable}.
\item\label{thmmaind}
 $D$  is planar and it has at most one join-reducible coatom.
\end{enumeratei}
Then \eqref{thmmaina} implies \eqref{thmmaind} and  the trivial implication \eqref{thmmainc} $\Rightarrow$ \eqref{thmmaina}  also holds.
\end{theorem}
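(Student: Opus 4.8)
\subsection*{Proof proposal}

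The implication \eqref{thmmainc}$\,\Rightarrow\,$\eqref{thmmaina} is immediate: a finite lattice is an algebra, and the family of inclusions $Q\subseteq D$ quantified over in \eqref{thmmaina} and \eqref{thmmainc} is \emph{literally the same}, so a system of representing finite lattices is in particular a system of representing algebras. For \eqref{thmmaina}$\,\Rightarrow\,$\eqref{thmmaind} the plan is to argue by contraposition: assuming that $D$ fails \eqref{thmmaind}, I would exhibit a set $Q$ with $J^+(D)\subseteq Q\subseteq D$ that is not represented by the principal congruences of any algebra. First reduce to the case that $D$ has at least two coatoms: if $D$ has at most one coatom, then $1_D\in J_0(D)$, so $J^+(D)=J_0(D)$ and ``fully $\aaerep$-representable'' coincides with ``fully $\aarep$-representable''; since ``at most one coatom'' forces ``at most one join-reducible coatom'', failing \eqref{thmmaind} then means that $D$ is non-planar, and Proposition~\ref{propdhmhZ} already says $D$ is not fully $\aarep$-representable. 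So assume $D$ has $\ge 2$ coatoms and is either non-planar or has $\ge 2$ join-reducible coatoms; since a planar distributive lattice has at most two coatoms, this leaves two cases: (B1) $D$ is non-planar, and (B2) $D$ is planar with exactly two coatoms, both join-reducible.

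In case (B1) I would reuse the construction behind Proposition~\ref{propdhmhZ}. A non-planar finite distributive lattice has a three-element antichain in $J(D)$, and the ``bad'' set $Q$ witnessing failure of planarity in that proof is built around such an antichain, which lives strictly below $1_D$; hence $Q$ may be chosen so that $1_D\in Q$ as well, and then the very same $Q$ witnesses that $D$ is not fully $\aaerep$-representable. (This needs the explicit check that $1_D$ can indeed be kept in that $Q$.)

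Case (B2) carries the genuinely new content. Write $a_1,a_2$ for the two coatoms and $m=a_1\wedge a_2$. Since $a_1\vee a_2=1_D$, the diamond isomorphism in the distributive lattice $D$ gives $[m,1_D]\cong\mathbf 2^2$ (so $m\prec a_1$, $m\prec a_2$, and $a_1,a_2$ are the only elements strictly between $m$ and $1_D$); moreover one checks that, because both coatoms are join-reducible, $m$ is join-reducible too. Take $Q:=D\setminus\{a_1,a_2\}$; then $J^+(D)\subseteq Q$ because a join-reducible coatom is neither $0$, nor $1_D$, nor join-irreducible. Suppose for contradiction that an algebra $A$ with an isomorphism $\phi\colon\Con(A)\to D$ satisfies $\phi(\Princ(A))=Q$, and put $\mu=\phi^{-1}(m)$, $\alpha_i=\phi^{-1}(a_i)$, $\nabla=\phi^{-1}(1_D)$. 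Then $\mu$ is a \emph{join-reducible principal} congruence, $[\mu,\nabla]\cong\mathbf 2^2$, $\alpha_1\vee\alpha_2=\nabla$, and $\alpha_1,\alpha_2\notin\Princ(A)$. The heart of the proof is a ``diamond obstruction'' generalizing Corollary~\ref{crollarczhTngPr} (which is the degenerate case $\mu=0$, $[\mu,\nabla]=\Con(A)\cong\mathbf 2^2$): this configuration is impossible. The argument I have in mind is at the level of partition blocks. Since $\alpha_i\notin\Princ(A)$, every pair lying in $\alpha_i$ generates a congruence below some lower cover of $\alpha_i$, so $\alpha_i$ is, as a binary relation, the union of its lower covers, which is therefore transitive; since $\mu\in\Princ(A)$ is join-reducible, $\mu$ \emph{strictly} contains the union of its own lower covers, so that union is not transitive, which forces $\mu$ to have a block $B$ with $|B|\ge 3$ on which its lower covers fail to act transitively. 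Using $[\mu,\nabla]\cong\mathbf 2^2$ and distributivity, every lower cover $\gamma$ of $\alpha_i$ has $\gamma\wedge\mu\prec\mu$ and is ``laminar over $\mu$'' on $B$, so neither $\mu$ nor any lower cover of $\alpha_1$ or $\alpha_2$ can link $B$ to its complement; hence $\alpha_1\vee\alpha_2$ cannot be the all-relation $\nabla$, contradicting $\alpha_1\vee\alpha_2=\nabla$ together with $B\neq A$ (the latter because $\mu\neq\nabla$).

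I expect the hard step to be precisely the diamond obstruction in full generality, that is, controlling the situation when $\alpha_1$ or $\alpha_2$ has \emph{several} lower covers besides $\mu$ — in the smallest instance $D=\mathbf 3\times\mathbf 3$ each coatom has exactly two lower covers and the laminarity is transparent, but in general the lower covers of a non-principal congruence in $\Con(A)$ need not be the join-irreducibles one started from, and their block structure must be pinned down. One natural remedy is to pass to a suitable quotient $A/\rho$ whose congruence lattice is an upper interval of $D$ isomorphic to $\mathbf 3\times\mathbf 3$ (such an interval exists near the top of any distributive lattice with two join-reducible coatoms) and to transport the bad $Q$ along, reducing everything to the $\mathbf 3\times\mathbf 3$ pattern; a similar reduction, via the $\mathbf 2^3$ generated by a three-element antichain of $J(D)$, is what one would want in case (B1). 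The remaining, lower-risk technical point is the verification in case (B1) that the bad $Q$ can be taken with $1_D\in Q$.
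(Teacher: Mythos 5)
Your implication \eqref{thmmainc}$\Rightarrow$\eqref{thmmaina} and your overall decomposition of \eqref{thmmaina}$\Rightarrow$\eqref{thmmaind} into a non-planarity obstruction and a two-join-reducible-coatoms obstruction match the paper (Lemmas~\ref{lemmadznPbWqbns} and \ref{lemmacxGnHjH}), but both obstructions are left essentially unproved. For non-planarity you defer to ``the construction behind Proposition~\ref{propdhmhZ}''; this is circular, because in the paper the planarity half of Proposition~\ref{propdhmhZ} is itself deduced from Theorem~\ref{thmmain}, so there is no independent construction to reuse. The actual content is Lemma~\ref{lemmadznPbWqbns}: one takes a three-element antichain $\set{\balpha,\bbeta,\bgamma}\subseteq J(D)$, sets $Q=J^+(D)\cup\set{\balpha\vee\bbeta\vee\bgamma}$ (so $1_D\in Q$ automatically, which disposes of the point you flag), writes the generating pair of $\bmu=\balpha\vee\bbeta\vee\bgamma$ as a shortest chain of pairs lying in $\balpha\cup\bbeta\cup\bgamma$, and shows the chain can be shortened, a contradiction. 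None of this argument appears in your proposal.

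In case (B2) your $Q=D\setminus\set{a_1,a_2}$ gives almost no control over $\Princ(A)$, and the ``diamond obstruction'' is unsound as written. The claim that a lower cover $\gamma$ of $\alpha_i$ ``cannot link $B$ to its complement'' fails: for $\gamma\prec\alpha_i$ with $\gamma\neq\mu$ one has $\gamma\nleq\mu$ (otherwise $\gamma\le\mu\prec\alpha_i$ forces $\gamma=\mu$), so $\gamma$ does merge distinct $\mu$-blocks, and nothing in your choice of $B$ prevents it from merging $B$ with its complement; hence the conclusion $\alpha_1\vee\alpha_2\neq\nabla$ does not follow. The fallback reduction to a $\mathbf 3\times\mathbf 3$ upper interval $[\rho,1]$ also fails, because $\Princ(A/\rho)=\set{\rho\vee\theta:\theta\in\Princ(A)}$ can contain a coatom $\alpha_1$ even when $\alpha_1\notin\Princ(A)$ (take any principal $\gamma\prec\alpha_1$ with $\rho\nleq\gamma$). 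The paper avoids all of this by choosing the minimal $Q=J^+(D)\cup\set{e}$, so that every principal congruence is join-irreducible, $e$, or $1$; it works not with the coatoms but with the two maximal join-irreducibles $\balpha=\ljsp(c_\ell)$ and $\bbeta=\rjsp(c_r)$ of Lemma~\ref{lemmacqzTrGmN}, first proves that any two distinct elements of $A$ are joined by a chain whose consecutive pairs generate join-irreducible congruences, and then shows that every $e$-block lies inside a single $\balpha$- or $\bbeta$-class, contradicting $e\nleq\balpha$ and $e\nleq\bbeta$. Your configuration ($\mu$ principal and join-reducible, $\alpha_1,\alpha_2$ not principal, $\nabla$ possibly principal) does not by itself yield a contradiction by the block argument you sketch, so the key step of this case is missing.
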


The notation \aaerep{} in Theorem~\ref{thmmain} comes from \emph{algebra} and $1\in J^+(D)\subseteq Q$, while  \flrep{} comes from  \emph{finite lattice}. The concept of full \flrep{}-representability and, more generally, the representability of just one subset $Q$ of $D$ by principal congruences of a finite lattice $L$ are taken from Gr\"atzer~\cite{ggwith1} and Gr\"atzer and Lakser~\cite{gratzerlakser}. 

\begin{remark}\label{remarktLpR} 
Cz\'edli~\cite{czgDQprincrep}  gave\footnote{After \texttt{https://arxiv.org/abs/1705.10833v1}, the first version of the present paper.
While  \cite{czgDQprincrep} is mainly for some specialists of lattice theory, the present paper is written for a wider readership.} a long proof for the implication  \eqref{thmmaind} $\Rightarrow$ \eqref{thmmainc}. Hence, for a finite distributive lattice $D$,   \eqref{thmmaina},  \eqref{thmmainc},  and \eqref{thmmaind} in Theorem~\ref{thmmain} are equivalent conditions. In particular,  \eqref{thmmaina} $\Rightarrow$ \eqref{thmmainc}, which seems to be a surprise. 
\end{remark}

Next, we need the following concept, introduced in Gr\"atzer~\cite{ggwith1}.

\begin{definition}\label{defsznzSj} 
Let $D$ be a finite distributive lattice. 
\begin{enumeratei}
\item 
By a \emph{$J(D)$-colored chain} we mean a triplet $\tripl C\szn D$ such that $C$ is a finite chain, $D$ is a finite distributive lattice, and $\szn\colon \Prime(C)\to J(D)$ is a surjective map from the \emph{set $\Prime(C)$ of all prime intervals} of $C$ onto $J(D)$. If $\inp\in\Prime(C)$, then $\szn(\inp)$ is the \emph{color} of the edge $\inp$. 
\item Given a $J(D)$-colored chain $\tripl C\szn D$, we define a map denoted by $\erep$ from the \emph{set $\Intv(C)$ of all intervals} of  $C$ onto $D$ as follows: 
for $I\in\Intv(C)$,  let 
\begin{equation}
\erep(I):=\bigvee_{\inp\in\Prime(I)} \szn(\inp)\,;
\label{eqsdhsdrepRtp} 
\end{equation}
the join is taken in $D$ and $\erep(I)$ is called the \emph{element represented} by $I$.
\item The set 
\[\srep(C,\szn,D):=\set{\erep(I): I\in \Intv(C)}\] 
will be called the  \emph{set represented} by the $J(D)$-colored chain $\tripl C\szn D$. Clearly,  by the surjectivity of $\szn$,  $Q:=\srep(C,\szn,D)$ satisfies \eqref{eqTzhjP} in this case.  
\item For a subset $Q$ of $D$, the inclusion $Q\subseteq D$ is \emph{chain-representable} if there exists a $J(D)$-colored chain $\tripl C\szn D$ such that $Q=\srep(C,\szn, D)$. Note that $C$ need not be a subchain of $D$.
\item Finally, a finite distributive lattice $D$ is \emph{fully chain-representable} if 
for  every $Q$ satisfying \eqref{eqTzhjP}, the inclusion  $Q\subseteq D$ is chain-representable.
\end{enumeratei}
\end{definition}

Armed with this definition, we formulate the following statement.

\begin{proposition}\label{propcLzGb} Let $D$ be a finite distributive lattice. Then $D$ is fully chain-representable if and only if it is planar and it has at most one join-reducible coatom.
\end{proposition}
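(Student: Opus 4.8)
The plan is to prove the two directions separately, exploiting the fact that ``fully chain-representable'' and ``fully \aaerep-representable'' are closely tied together via Definition~\ref{defsznzSj}. For the harder direction --- that a planar finite distributive lattice $D$ with at most one join-reducible coatom is fully chain-representable --- I would fix an arbitrary $Q$ with $J^+(D)\subseteq Q\subseteq D$ and explicitly build a $J(D)$-colored chain $\tripl C\szn D$ with $\srep(C,\szn,D)=Q$. Planarity is what makes $J(D)$ tractable: since $D$ is a finite planar distributive lattice, $J(D)$ is the union of at most two chains (this is the classical fact that planar distributive lattices have order dimension $\le 2$, so their poset of join-irreducibles splits into two chains), and the ``at most one join-reducible coatom'' hypothesis controls how these two chains meet near the top. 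I would list the elements of $Q$ as $q_1,\dots,q_m$ and, for each $q_i$, choose an interval $I_i$ of the chain $C$ whose set of edge-colors joins to $q_i$; the edges needed are governed by the (at most two) chains covering $J(D)$, so each $q_i$ can be realized by a block of consecutive edges whose colors run monotonically through (a sub-interval of) one of the two chains, possibly with one ``switch'' at the level of the unique join-reducible coatom. Concatenating these blocks --- reusing shared colored edges where the chains overlap --- yields a single chain $C$ and a surjective coloring $\szn$, and one checks that $\erep$ hits exactly the $q_i$'s: the blocks give ``$\supseteq Q$'', and no sub-interval of $C$ can join to anything outside $Q$ because every initial/final segment of one of the two covering chains of $J(D)$ has its join in $Q$ (here I would use that $Q\supseteq J_0(D)$ and is forced to contain the relevant partial joins, plus $0,1\in Q$).

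For the converse --- fully chain-representable implies planar with at most one join-reducible coatom --- I would argue that chain-representability of every admissible $Q$ is a strong structural constraint. The cleanest route: if $D$ is fully chain-representable, then in particular, taking $Q=J^+(D)$ itself, there is a $J(D)$-colored chain $\tripl C\szn D$ with $\srep(C,\szn,D)=J^+(D)$; the point is that then $\erep(I)\in J(D)\cup\set{0,1}$ for every interval $I$ of $C$, which forces the colors appearing along $C$ to be very restricted --- any two colors that both occur as colors of edges lying in a common interval must have a join that is still join-irreducible (or equals $1$). Analyzing which subsets of $J(D)$ can be ``traced out monotonically'' by a chain in this way yields that $J(D)$ is covered by at most two chains, i.e.\ $D$ has order dimension $\le 2$, i.e.\ $D$ is planar; and a separate small argument with a cleverly chosen $Q$ (one that must contain two distinct join-reducible elements just below the top) shows that two join-reducible coatoms would produce a $Q$ that no colored chain can represent, essentially because a chain forces a linear order on the joins it realizes near the top while two incomparable join-reducible coatoms demand a ``fork''. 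This is morally the same obstruction as in Corollary~\ref{crollarczhTngPr}.

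The main obstacle I anticipate is the forward construction in the hard direction: making the block-concatenation argument airtight, i.e.\ proving that $C$ can be assembled so that $\erep$ realizes exactly $Q$ and nothing more. The danger is that concatenating blocks creates new intervals spanning parts of two different blocks whose combined color-set joins to some element of $D\setminus Q$. Handling this requires a careful ordering of the blocks (so that the join of the colors in any ``straddling'' interval is sandwiched between joins already known to lie in $Q$) and a careful treatment of the overlap region near the at-most-one join-reducible coatom, where the two chains covering $J(D)$ come together. I would likely isolate this as a lemma: given that $J(D)$ is a union of two chains with a prescribed top behavior, every subset $Q$ with $J^+(D)\subseteq Q\subseteq D$ is the $\erep$-image of some colored chain. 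Once that lemma is in place, both implications of Proposition~\ref{propcLzGb} follow, and --- combined with Proposition~\ref{propdhmhZ} and the fact that chain-representability of $Q\subseteq D$ yields an algebra (indeed, as the last sentence of the abstract promises, a small finite one) representing $Q\subseteq D$ --- one recovers the equivalence of \eqref{thmmaina} and \eqref{thmmaind} in Theorem~\ref{thmmain}.
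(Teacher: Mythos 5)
Your overall architecture (two separate directions; adversarial choices of $Q$ for necessity; an explicit colored chain built from the two boundary chains of $J(D)$ for sufficiency) matches the paper's, but the sufficiency direction has a genuine gap exactly at the point you flag as ``the main obstacle,'' and the fix you propose would not work. No ``careful ordering of the blocks'' can, by itself, prevent a straddling interval from representing an element of $D\setminus Q$, and ``sandwiched between joins already known to lie in $Q$'' proves nothing since $Q$ need not be convex. The paper's solution is a separator trick that you are missing: because $D$ has at most one join-reducible coatom (and at most two coatoms by planarity), there is a \emph{join-irreducible} coatom $c$ (take $c=1$ if $1\in J(D)$), and $\filter c=\set{c,1}\subseteq Q$ automatically. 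The chain is then built as $u_1,c,u_2,c,\dots,c,u_m,c,\ljsp(x_1),\rjsp(x_1),c,\ljsp(x_2),\rjsp(x_2),c,\dots$, where the $u_j$ list $J(D)$ and the $x_i$ list $Q\setminus J(D)$. Every interval of length $\ge 2$ either contains a $c$-colored edge, hence represents an element of $\filter c\subseteq Q$, or is exactly one two-edge block $\ljsp(x_i),\rjsp(x_i)$, which represents $x_i$ by \eqref{eqtmGhGha}. Note also that the paper uses only \emph{two} edges per join-reducible $x_i$ (its left and right join supports), whereas your blocks ``running monotonically through a sub-interval of one of the two chains, possibly with one switch'' would themselves contain sub-intervals (around the switch) joining to unwanted join-reducible elements. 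This is also where the coatom hypothesis actually enters the sufficiency proof; in your sketch it never gets used there.

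The necessity direction is only a sketch but is in the right spirit. Two cautions: first, $Q=J^+(D)$ alone does not obviously yield a contradiction from non-planarity (if the pairwise joins of the antichain happen to equal $1$, no forbidden value is forced); the paper instead takes $Q=J^+(D)\cup\set{p_0\vee p_1\vee p_2}$ for a three-element antichain $\set{p_0,p_1,p_2}\subseteq J(D)$ and runs a maximal-interval argument in which the offending join is forced into $\set{p,1}$, both cases giving $p_2\le p_0$ or $p_2\le p_1$. Second, for the coatom condition the paper takes $Q=J^+(D)\cup\set{e}$ with $e=c_\ell\wedge c_r$ and extends a maximal interval representing $e$ by one edge, using Lemma~\ref{lemmaeqdczGnBw} to force $1\in J(D)$; your ``fork'' intuition points in the right direction but is not an argument. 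As written, the proposal is a plausible plan rather than a proof, and the one lemma you defer (assembling the chain so that $\erep$ hits exactly $Q$) is precisely the content of the sufficiency proof.
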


Although Proposition~\ref{propcLzGb} is now a consequence of the conjunction of Cz\'edli~\cite{czgDQprincrep} and Gr\"atzer~\cite{ggwith1}, both \cite{czgDQprincrep} and \cite{ggwith1} contain long proofs. In the present paper, we give a direct and short  proof of Proposition~\ref{propcLzGb}. 

The following corollary will easily be concluded from the previous statements.

\begin{corollary}\label{corolsrpBngN} If a finite distributive lattice is fully \aarep-representable, then it is fully \flrep-representable.
\end{corollary}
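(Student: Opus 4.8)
The plan is to concatenate the two Propositions already in hand with the known passage from chain-representations to finite lattices. Assume that the finite distributive lattice $D$ is fully \aarep-representable. By Proposition~\ref{propdhmhZ}, $D$ is then planar and $1_D\in J_0(D)$; that is, either $|D|=1$ or $D$ has exactly one coatom. In both cases $D$ has \emph{at most one} coatom, hence a fortiori at most one \emph{join-reducible} coatom. Thus $D$ satisfies the hypothesis of Proposition~\ref{propcLzGb}, and we conclude that $D$ is fully chain-representable: for every $Q$ with $J^+(D)\subseteq Q\subseteq D$ there is a $J(D)$-colored chain $\tripl C\szn D$ with $Q=\srep(C,\szn,D)$.

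It remains to turn each such chain-representation into a finite lattice. This is exactly the content of (the relevant direction of) the chain-representability results of Gr\"atzer~\cite{ggwith1}: from a $J(D)$-colored chain $\tripl C\szn D$ representing $Q\subseteq D$ one builds a finite lattice $L$ together with an isomorphism $\Con(L)\to D$ that carries $\Princ(L)$ onto $Q$. Applying this to every $Q$ with $J^+(D)\subseteq Q\subseteq D$ shows that $D$ is fully \flrep-representable, which is the assertion of the corollary. (Alternatively, one may observe that by Proposition~\ref{propdhmhZ} the lattice $D$ satisfies condition~\eqref{thmmaind} of Theorem~\ref{thmmain} and then invoke the implication \eqref{thmmaind}$\Rightarrow$\eqref{thmmainc} proved in Cz\'edli~\cite{czgDQprincrep}, as recalled in Remark~\ref{remarktLpR}; either route works.)

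The argument is essentially bookkeeping, so the main (and only substantive) obstacle is quoting the chain$\,\to\,$lattice construction in the correct form; a minor point to check is that the range of $Q$ in the definition of full \aarep-representability ($J_0(D)\subseteq Q$) is wider than the range used in full \flrep-representability and in chain-representability ($J^+(D)=J_0(D)\cup\set1\subseteq Q$). This causes no difficulty, since the latter family of sets $Q$ is contained in the former, so no relevant $Q$ is lost; in fact full \aarep-representability is never used directly, only the structural consequence extracted from it via Proposition~\ref{propdhmhZ}. One should also dispatch the degenerate case $|D|=1$, where every condition holds trivially.
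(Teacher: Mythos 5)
Your proposal is correct and follows essentially the same route as the paper: Proposition~\ref{propdhmhZ} gives planarity and $1_D\in J_0(D)$, Proposition~\ref{propcLzGb} then gives full chain-representability, and Theorem~\ref{thmsofar}\eqref{thmsofare} (Gr\"atzer's chain-to-lattice result, whose hypothesis $1_D\in J(D)$ is available once $|D|>1$ is assumed) finishes the argument. The only cosmetic difference is that the paper dispatches $|D|=1$ at the outset rather than at the end, and does not need your aside about $J_0(D)$ versus $J^+(D)$ since these coincide when $1_D\in J(D)$.
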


Next, we collect some  known facts; most of them will be needed in our proofs. 
\semmi{Since there are several concepts of representability in the present paper, we use our terminology.} 

\begin{theorem}[Summarizing known facts]\label{thmsofar} Let $D$ be finite distributive lattice and assume that $Q\subseteq D$ satisfies \eqref{eqTzhjP}. Then 
 the following five statements hold.
\begin{enumeratei}
\item\label{thmsofara} \textup{(Gr\"atzer and Lakser~\cite{gratzerlakser}\footnote{Since I am mentioned in the addendum of \cite{gratzerlakser}, let me note that only some optimization of G.\ Gr\"atzer and H.\ Lakser's original proof of this fact is due to me.})} If $D$ is fully \flrep-representable,  then it is planar. 
\item\label{thmsofarb} \textup{(G.\ Gr\"atzer, personal communication)} If  $D$ is fully chain-rep\-resent\-able, then it is planar. 
\item\label{thmsofarc}  \textup{(Gr\"atzer~\cite{ggwith1})} If the inclusion $Q\subseteq D$ is principal congruence representable by a finite lattice, then it is chain-representable.
\item\label{thmsofard}  \textup{(Gr\"atzer~\cite{ggwith1})} If $D$ is fully \flrep-representable, then it is fully chain-represent\-able.
\item\label{thmsofare}  \textup{(Gr\"atzer~\cite{ggwith1})} If the inclusion $Q\subseteq D$ is chain-representable and $1_D\in J(D)$, then  this inclusion is principal congruence representable by a finite lattice.
\end{enumeratei}
\end{theorem}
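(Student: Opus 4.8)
The plan is as follows. All of \eqref{thmsofara}--\eqref{thmsofare} are quoted from the literature --- \eqref{thmsofara} from Gr\"atzer and Lakser~\cite{gratzerlakser}, \eqref{thmsofarc}, \eqref{thmsofard}, \eqref{thmsofare} from Gr\"atzer~\cite{ggwith1}, and \eqref{thmsofarb} from a personal communication --- so the first task is merely to organize the logical dependencies so that only a few of them need a real proof. Observe that \eqref{thmsofard} is immediate from \eqref{thmsofarc}: if $D$ is fully \flrep-representable then, for every $Q$ with \eqref{eqTzhjP}, some finite lattice $L$ carries an isomorphism $\phi\colon\Con(L)\to D$ with $\phi(\Princ(L))=Q$, and \eqref{thmsofarc} turns this representation into a chain-representation of $Q\subseteq D$. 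Likewise, \eqref{thmsofara} follows by composing \eqref{thmsofard} with \eqref{thmsofarb}. Hence it suffices to prove \eqref{thmsofarc}, \eqref{thmsofare}, and \eqref{thmsofarb}.

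For \eqref{thmsofarc} I would start from two classical facts about a \emph{finite} lattice $L$. First, every prime interval $\inp=[a,b]$ of $L$ generates a join-irreducible congruence $\con_L(\inp)$: if $\con_L(\inp)=\alpha\jj\beta$ with $\alpha,\beta<\con_L(\inp)$, then applying the map $x\mapsto(a\jj x)\wedge b$ (which respects congruences and fixes $a$ and $b$) to an $\alpha,\beta$-alternating sequence witnessing $(a,b)\in\alpha\jj\beta$ lands the whole sequence in the two-element interval $\set{a,b}$, so some consecutive pair of it equals $(a,b)$ up to order and lies in $\alpha$ or in $\beta$, forcing $\con_L(\inp)\le\alpha$ or $\con_L(\inp)\le\beta$ --- a contradiction. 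Second, and consequently, $J(\Con L)=\set{\con_L(\inp):\inp\in\Prime(L)}$. Given $L$, $\phi$, $Q$ as above, the goal is a finite chain $C$ and a surjective $\szn\colon\Prime(C)\to J(D)$ with $\srep(C,\szn,D)=Q$; one colours an edge $\inp$ of $C$ by $\phi(\con_L(\inp'))\in J(D)$ for an appropriate prime interval $\inp'$ of $L$, so that $\erep(I)=\phi\bigl(\bigvee\set{\con_L(\inp'):\inp\subseteq I}\bigr)$ is always a principal congruence of $L$ transported into $D$. The one genuinely delicate point --- the reason this is not a one-liner --- is that a single maximal chain of $L$ need not realize all of $\Princ(L)$ (for example, no maximal chain of $B_4$ realizes all of $\Princ(B_4)$, since four prime colours along a chain produce only $11$ distinct interval-supports while $|\Princ(B_4)|=16$), so $C$ has to be assembled more carefully out of intervals of $L$, and verifying that the assembly yields exactly $Q$ with nothing spurious is the content of \cite{ggwith1}, which I would reproduce.

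Statement \eqref{thmsofare}, the converse ``coloured chain $\Rightarrow$ finite lattice'' direction, is the one I expect to be the main obstacle. Here one is handed a $J(D)$-coloured chain $\tripl C\szn D$ with $1_D\in J(D)$ and $Q=\srep(C,\szn,D)$, and must build a finite lattice $L$ with $\Con(L)\cong D$ whose principal congruences match $Q$ under that isomorphism. The plan is to reproduce the construction of \cite{ggwith1}: replace each edge of $C$ by a small ``coloured gadget'' lattice, glue these gadgets along the vertices of $C$, and arrange them so that prime-perspectivities inside the resulting finite lattice reproduce simultaneously the order of $J(D)$ and exactly the collapsing pattern recorded by $\szn$; then $\Con(L)\cong D$ and $\Princ(L)$ corresponds to $\srep(C,\szn,D)=Q$. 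The hypothesis $1_D\in J(D)$ --- equivalently, that $D$ has a single coatom --- is precisely what makes the top of the glued lattice admissible; without it the construction breaks, in accordance with Corollary~\ref{crollarczhTngPr} and Proposition~\ref{propdhmhZ}.

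Finally, for \eqref{thmsofarb} I would argue contrapositively: assuming $D$ is not planar, I would exhibit one admissible set $Q$ --- the economical candidate being the smallest one, $Q=J^+(D)$ --- that is not chain-representable. A chain-representation of $J^+(D)$ would be a surjective colouring of a finite chain in which \emph{every} subinterval represents an element of $J(D)\cup\set{0_D,1_D}$; but a combinatorial description of the planar finite distributive lattices shows that non-planarity of $D$ forces, among the join-irreducibles, a configuration in which two edges carrying distinct colours are unavoidably contained in a common subinterval whose represented element is join-reducible and different from $1_D$, hence outside $J^+(D)$ --- a contradiction. This is the Gr\"atzer--Lakser argument recast at the level of coloured chains. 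Since \eqref{thmsofara}--\eqref{thmsofare} are all taken from the literature, I would present them with the above reductions and precise references rather than with full independent proofs.
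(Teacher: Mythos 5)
This theorem is explicitly a summary of quoted results, and the paper offers no proof of it beyond the citations together with the remarks that \eqref{thmsofard} is a particular case of \eqref{thmsofarc} and that \eqref{thmsofarb} is proved like \eqref{thmsofara}. Your logical reductions --- \eqref{thmsofard} from \eqref{thmsofarc}, and \eqref{thmsofara} from \eqref{thmsofard} combined with \eqref{thmsofarb} --- are valid, and your sketches of \eqref{thmsofarc} and \eqref{thmsofare} are reasonable placeholders for the arguments of \cite{ggwith1}, which the paper likewise does not reproduce.

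There is, however, a genuine gap in your argument for \eqref{thmsofarb}: the ``economical'' witness $Q=J^+(D)$ does not work in general. Let $P$ be the ordered set consisting of a three-element antichain $\set{p_0,p_1,p_2}$ together with one further element $t$ above all three, and let $D$ be the lattice of down-sets of $P$; then $D$ is non-planar, since $\set{\ideal{p_0},\ideal{p_1},\ideal{p_2}}$ is a three-element antichain in $J(D)$, and $1_D=\ideal t\in J(D)$. Color a five-edge chain, from bottom to top, by $\ideal{p_0},\, 1_D,\, \ideal{p_1},\, 1_D,\, \ideal{p_2}$. This coloring is surjective onto $J(D)$, and every interval of length at least $2$ contains an edge colored $1_D$ (the edges not so colored are pairwise non-adjacent), hence represents $1_D$. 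Therefore the represented set is exactly $J^+(D)$, so $J^+(D)$ \emph{is} chain-representable for this non-planar $D$, and your claim that non-planarity forces some subinterval to represent a join-reducible element different from $1_D$ fails for this choice of $Q$. The witness that actually works --- and the one used in the first paragraph of the proof of Proposition~\ref{propcLzGb}, which is where the paper in effect proves \eqref{thmsofarb} --- is $Q=J^+(D)\cup\set{p}$ with $p=p_0\vee p_1\vee p_2$: one takes a maximal interval representing $p$, shows that all three colors $p_0,p_1,p_2$ must occur in it, and then the extra element $p$ of $Q$ is what forces the interval obtained by extending a maximal $p_1$-subinterval by one edge to represent an element of $\set{p,1_D}$, whence $p_2\leq p_1$ or $p_2\leq p_0$ by \eqref{eqdjlfkWsDbF}, a contradiction. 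Your proposal is missing exactly this enlargement of $Q$.
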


Note that \eqref{thmsofard} is a particular case of \eqref{thmsofarc}, \eqref{thmsofare} is very deep, and the proof of \eqref{thmsofarb} is similar to that of \eqref{thmsofara}. Note also that there are several results on the representability of an  ordered set $Q$ as $\Princ(L)$ (without taking care of $D$), see
Cz\'edli~\cite{czgonemap}, \cite{czgaleph0}, \cite{czgprincout}, \cite{czgmanymaps}, and \cite{czgcometic} and    Gr\"atzer~\cite{gG13}, \cite{gGasm2016}, \cite{gGprincII}, and \cite{gGprincIII}. For more about full \pcrely, see 
Gr\"atzer~\cite{ggwith1} and  Gr\"atzer and Lakser~\cite{gratzerlakser}.

While Theorem~\ref{thmmain} and Remark~\ref{remarktLpR} give a satisfactory description of \emph{full} representability, we know much less on the representability of a \emph{single} inclusion $Q\subseteq D$. Theorem~\ref{thmsofar}\eqref{thmsofarc} and \eqref{thmsofare}, taken from  Gr\"atzer~\cite{ggwith1}, reduces the problem to chain-representability, provided that $1_D\in Q$. If 
$1_D\in Q$ is not assumed then 
we can prove only the following statement. 

\begin{proposition}\label{propositionbchrsnlsgR} If $Q$ is a subset of a finite distributive lattice $D$ such that the inclusion $Q\subseteq D$ is chain-representable, then this inclusion  is principal congruence representable by a finite algebra $A$. Furthermore, if $Q\subseteq D$  is represented by a $J(D)$-colored chain  $\tripl C\szn D$, then $A$ can be chosen so that $|A|=|C|$.
\end{proposition}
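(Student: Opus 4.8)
The plan is to construct, from a $J(D)$-colored chain $\tripl C\szn D$ with $Q=\srep(C,\szn,D)$, a finite algebra $A$ on the underlying set $C$ whose congruence lattice is $D$ and whose principal congruences correspond exactly to $Q$. Since $C$ is a finite chain, say $C=\set{c_0<c_1<\dots<c_n}$ with $n=|C|-1$ prime intervals $\inp_i=[c_{i-1},c_i]$, I would equip $A$ with a family of unary operations in addition to (or instead of) the lattice operations of $C$. The congruence lattice of the bare chain $C$ (as a lattice, or as an algebra with no operations beyond order) is the Boolean lattice $2^{\set{1,\dots,n}}$ of subsets of the set of prime intervals; the coloring $\szn$ induces a surjective $\vee$-homomorphism from $J(2^n)$ onto $J(D)$, hence a surjective lattice homomorphism $\Phi\colon 2^n\to D$ sending the congruence collapsing a set $S$ of edges to $\bigvee_{i\in S}\szn(\inp_i)$. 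The task is to add unary operations that (i) force two edges $\inp_i,\inp_j$ to be collapsed together by any congruence exactly when $\szn(\inp_i)\le \erep$ of something forcing $\szn(\inp_j)$, so that $\Con(A)$ collapses from $2^n$ down to $D$, and (ii) control which of the resulting congruences are principal.

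For step (i) I would use the standard trick: for each pair of prime intervals $\inp_i,\inp_j$ with $\szn(\inp_i)\le\szn(\inp_j)$ in $J(D)$ — more precisely, to realize the order and joins of $J(D)$ — introduce a unary polynomial that maps the endpoints of $\inp_i$ onto the endpoints of $\inp_j$ (a ``spike'' or retraction-type map), so that collapsing $\inp_j$ forces collapsing $\inp_i$. One checks that with enough such maps, $\Con(A)$ becomes isomorphic to the lattice of order ideals / the image $D$ of $\Phi$; this is where one must verify carefully that no unintended collapses are created, i.e. that the unary operations are compatible with precisely the congruences corresponding to elements of $D$. This bookkeeping — choosing the operations so that $\Con(A)\cong D$ on the nose, neither too coarse nor too fine — is the main obstacle, and it is essentially the finite-algebra analogue of Gr\"atzer's chain-to-lattice construction in \cite{ggwith1}, but easier because we are allowed arbitrary unary operations rather than being confined to lattice terms.

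For step (ii), once $\Con(A)\cong D$ via $\phi$, I would observe that for any interval $I=[c_k,c_m]$ of $C$ the principal congruence $\con(c_k,c_m)$ is the join of the $\con(\inp_i)$ for $k<i\le m$, hence $\phi(\con(c_k,c_m))=\erep(I)$; so $\phi(\Princ(A))\supseteq\srep(C,\szn,D)=Q$ automatically, because every principal congruence $\con(a,b)$ with $a<b$ in $C$ is of this interval form (and $\con(a,a)=0\in Q$, $\con$ of the pair realizing $1_D$ — note $1_D\in Q$ by \eqref{eqTzhjP} — is there too). For the reverse inclusion $\phi(\Princ(A))\subseteq Q$, I need that every principal congruence of $A$ is generated by a pair $(a,b)$ with $a,b$ comparable in $C$; this should follow if the unary operations are chosen so that $\con(a,b)=\con(a\wedge b,a\vee b)$ for all $a,b$ (which holds automatically when $C$ is a chain under its own order — any two elements are comparable!), so in fact $\Princ(A)=\set{\con(c_k,c_m):k\le m}$ and $\phi$ maps this exactly onto $\srep(C,\szn,D)=Q$. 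Thus $A$ represents $Q\subseteq D$, and by construction the underlying set of $A$ is $C$, giving $|A|=|C|$. The only real work, to repeat, is pinning down the unary operations in step (i) and verifying $\Con(A)\cong D$; the rest is essentially formal given Definition~\ref{defsznzSj}.
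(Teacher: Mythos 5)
Your overall architecture coincides with the paper's: build a (finite, unary) algebra on the underlying set of $C$ itself, arrange matters so that congruences are determined by their sets of collapsed prime intervals, add forcing operations realizing the order of $J(D)$ so that $\Con(A)\cong D$, and then observe that, $C$ being a chain, every principal congruence is $\con(w,z)$ with $w\le z$ and corresponds to $\erep([w,z])$, whence $\Princ(A)$ corresponds exactly to $\srep(C,\szn,D)=Q$ and $|A|=|C|$. Your step (ii) is essentially the paper's closing argument and is fine once step (i) is in place.

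The gap is that step (i) --- which you yourself call ``the main obstacle'' and ``the only real work'' --- is exactly where the substance of the proof lies, and it is not carried out. A generic ``spike'' sending the endpoints of one edge to the endpoints of another will in general create unintended collapses: if such a map fixes an element $u$ with $u\prec 0_{\inp_j}$ while sending $0_{\inp_j}$ to a distant $0_{\inp_i}$, then collapsing the single edge $[u,0_{\inp_j}]$ forces the collapse of a long interval, and $\Con(A)$ drops strictly below $D$. The paper avoids this by a very particular choice: the forcing operation $f_{\inp\inh}$ (introduced only when $\szn(\inp)\ge\szn(\inh)$) takes only the two values $0_\inh$ and $1_\inh$, according to whether its argument lies in the segment $\belsh\inp\inh$ between the two edges or in its complement $\kulsh\inp\inh$; consequently $f_{\inp\inh}(u)\neq f_{\inp\inh}(v)$ forces $u$ and $v$ to be separated by $\inp$ or by $\inh$, which is precisely what makes the ``no unintended collapses'' verification succeed. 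Two further slips: (a) as written, your spike maps the endpoints of $\inp_i$ onto those of $\inp_j$, which would make collapsing $\inp_i$ force collapsing $\inp_j$ --- the opposite of what you want when $\szn(\inp_i)\le\szn(\inp_j)$; (b) the congruence lattice of $C$ with no operations at all is the full partition lattice, not $2^{\set{1,\dots,n}}$, so you must retain either the lattice operations or, as the paper does, the contraction operations $g_{uv}$, and then actually prove that with them every congruence is determined by the edges it collapses. Until the operations are pinned down and $\Con(A)\cong D$ is verified, the proposal is a plan rather than a proof.
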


Note that, in the $1_D\in Q$ case, the finite lattice constructed in Gr\"atzer~\cite{ggwith1} to represent $Q\subseteq D$ has much more elements than $|C|$.

\color{black}

\subsection*{Outline of the rest of the paper}
We recall or prove some technical and mostly folkloric statements on planar distributive lattices in Section~\ref{sectionpropplandist}. Section~\ref{sectionchainrepr} contains the proof of Proposition~\ref{propcLzGb}. In Section~\ref{sectionconA}, we deal with congruences of arbitrary algebras and, with the exception of Proposition~\ref{sectionSnglPrf},  prove the rest of our statements formulated in the present section. 
{Finally, 
Section~\ref{sectionSnglPrf} is devoted to the proof of Proposition~\ref{sectionSnglPrf}.}

\begin{figure}[ht] 
\centerline
{\includegraphics[scale=1.0]{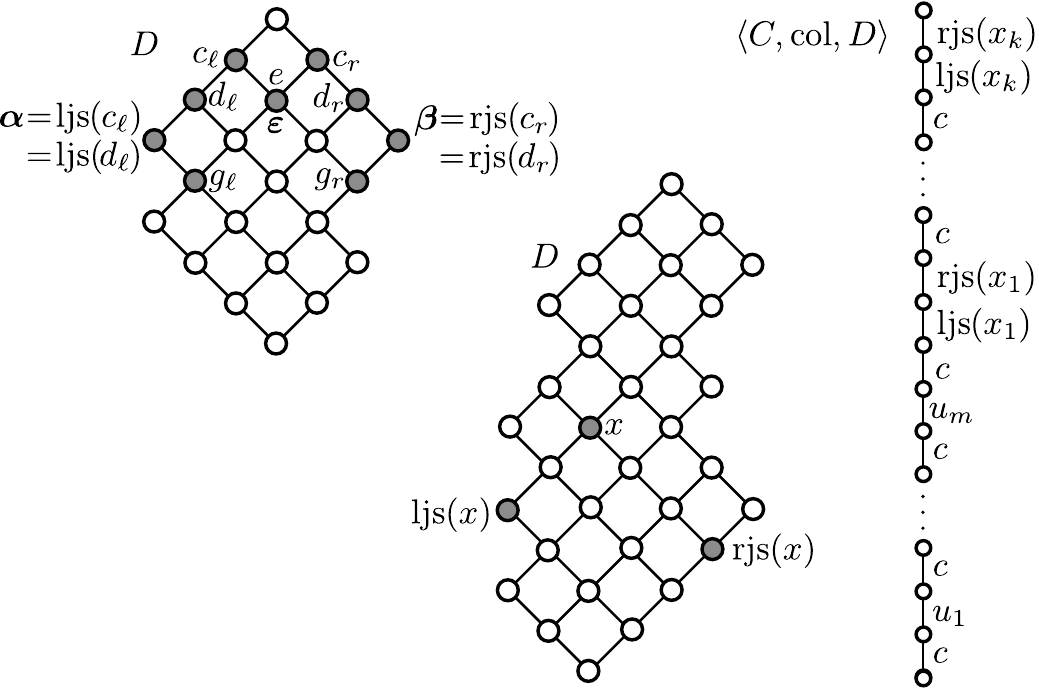}}
\caption{{Examples}}
\label{figexone}
\end{figure}%

\section{Properties of planar distributive lattices}
\label{sectionpropplandist}
In this section, we recall some  known facts about planar distributive lattices.
For concepts not defined here, see Cz\'edli and Gr\"atzer~\cite{czgggchapter} and see also the monographs Gr\"atzer~\cite{ggGLT} and  \cite{ggCFL2}. As far as \emph{distributive} lattices are considered, many of the facts below belong to the folklore. Interestingly, a lot of them are valid \emph{not only for distributive} lattices.  
In the whole section, unless otherwise stated, $D$ denotes a \emph{planar distributive lattice}.  Note that a planar lattice is \emph{finite} by definition.
It belongs to the folklore, see also Gr\"atzer and Knapp~\cite{gGknappI}, that 
\begin{equation}
\text{each element of $D$ has at most two covers and at most two lower covers.}
\label{eqtxt2cov}
\end{equation} 
As usual, we fix a \emph{planar} diagram of $D$; adjectives like ``left'' and ``right'' are understood modulo this diagram.  Assume that
\begin{equation}
\parbox{11cm}{$D$ has two distinct join-reducible coatoms, $c_\ell$ and $c_r$, and let $e=c_\ell\wedge c_r$;}
\label{eqtxtdzhGtrT} 
\end{equation}
the notation is chosen so that  $c_\ell$ is to the left of $c_r$; 
see the first diagram in Figure~\ref{figexone}. 
It follows from  \eqref{eqtxt2cov} that  $c_\ell$ and $c_r$ belong to the \emph{left boundary chain} $\lbound(D)$  and to the \emph{right boundary chain} $\rbound(D)$, respectively. 
Since $c_\ell$ is incomparable with $c_r$, these elements are not the least elements of the corresponding boundary chains. Hence, there are a unique $d_\ell\in\lbound(D)$ and a unique $d_r\in\rbound(D)$ such that $d_\ell\prec c_\ell$ and $d_r\prec c_r$.  
As usual, for $x\in D$,  the ideal $\set{y: y\leq x}$ will be denoted by $\ideal x$.  The following observation belongs to the folklore; note that it holds  in every slim semimodular\footnote{This concept, introduced in Gr\"atzer and Knapp~\cite{gGknappI} and
surveyed in Cz\'edli and Gr\"atzer~\cite{czgggchapter}, is not needed here; it suffices to know that every planar distributive lattice is slim and semimodular.} lattice, not only in our $D$. 
\begin{equation}
\parbox{10 cm}{For $x\in D$, the largest element  of 
 $\lbound (D)\cap J_0(D)\cap\ideal x$ is the \emph{left join support} of $x$; it is denoted by $\ljsp(x)$. The \emph{right join support} $\rjsp(x)$ of $x$ is the largest element of  $\rbound (D)\cap J_0(D)\cap\ideal x$. 
 Both $\ljsp(x)$ and $\rjsp(x)$ are join-irreducible elements (by definition) and we have that $x=\ljsp(x)\vee\rjsp(x)$;}
\label{eqtmGhGha}
\end{equation}
see the second part of Figure~\ref{figexone}. 
For the slim semimodular case, the equality in \eqref{eqtmGhGha} follows from the inclusion 
\begin{equation}
J(D)\subseteq \lbound (D)\cup\rbound (D); 
\label{eqdmcvhgbGbn}
\end{equation}
see Cz\'edli and Schmidt~\cite[Lemma 6]{czgschtvisualI} or Cz\'edli and Gr\"atzer~\cite{czgggchapter}. 
The following lemma is illustrated by the first part of Figure~\ref{figexone}.

\begin{lemma}\label{lemmaeqdczGnBw} Let $D$ be a finite distributive lattice satisfying  \eqref{eqtxtdzhGtrT}. Then, with the  notations from \eqref{eqtxtdzhGtrT} to \eqref{eqtmGhGha}, $1=\ljsp(d_\ell)\vee\rjsp(d_r)$.
\end{lemma}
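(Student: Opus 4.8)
The plan is to reduce the claim to the evident identity $1=c_\ell\vee c_r$, which holds because $c_\ell$ and $c_r$ are two distinct coatoms, together with the decomposition $x=\ljsp(x)\vee\rjsp(x)$ recorded in \eqref{eqtmGhGha}. The whole argument is elementary; the only points needing attention are keeping track of which element lies on which boundary chain and repeatedly exploiting that the coatoms $c_\ell,c_r$, being join-reducible, are not members of $J_0(D)$. First I would show that $\ljsp(c_\ell)=\ljsp(d_\ell)$ and, symmetrically, $\rjsp(c_r)=\rjsp(d_r)$. Since $c_\ell$ is join-reducible, $c_\ell\notin J_0(D)$. As $\lbound(D)$ is a chain and $d_\ell\prec c_\ell$ with $d_\ell\in\lbound(D)$, every element of $\lbound(D)$ that is strictly below $c_\ell$ already lies in $\ideal{d_\ell}$; hence $\lbound(D)\cap J_0(D)\cap\ideal{c_\ell}=\lbound(D)\cap J_0(D)\cap\ideal{d_\ell}$, and comparing the largest elements of the two sides gives $\ljsp(c_\ell)=\ljsp(d_\ell)$. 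The left--right mirror of this computation, applied to $c_r$ and $d_r$, yields $\rjsp(c_r)=\rjsp(d_r)$. Consequently it suffices to prove that $1=\ljsp(c_\ell)\vee\rjsp(c_r)$.

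Next I would establish the two inequalities $\rjsp(c_\ell)\le\rjsp(c_r)$ and $\ljsp(c_r)\le\ljsp(c_\ell)$. For the first, note that $\rjsp(c_\ell)\le c_\ell<1$, so $\rjsp(c_\ell)\ne 1$; also $\rjsp(c_\ell)\in J_0(D)$ while $c_r\notin J_0(D)$. Since $\rjsp(c_\ell)$ and $c_r$ both lie on the chain $\rbound(D)$, they are comparable, and $\rjsp(c_\ell)\ge c_r$ would force $\rjsp(c_\ell)\in\set{c_r,1}$ because $c_r$ is a coatom; both possibilities are excluded. Hence $\rjsp(c_\ell)<c_r$, so $\rjsp(c_\ell)\le d_r$ (as $d_r$ is the lower cover of $c_r$ in the chain $\rbound(D)$, hence the largest element of $\rbound(D)$ below $c_r$), giving $\rjsp(c_\ell)\in\rbound(D)\cap J_0(D)\cap\ideal{d_r}$ and therefore $\rjsp(c_\ell)\le\rjsp(d_r)=\rjsp(c_r)$. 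The mirror argument yields $\ljsp(c_r)\le\ljsp(c_\ell)$. Now, combining with \eqref{eqtmGhGha},
\[
1=c_\ell\vee c_r=\bigl(\ljsp(c_\ell)\vee\rjsp(c_\ell)\bigr)\vee\bigl(\ljsp(c_r)\vee\rjsp(c_r)\bigr)=\ljsp(c_\ell)\vee\rjsp(c_r),
\]
because the two omitted terms are absorbed by the last two inequalities. Substituting $\ljsp(c_\ell)=\ljsp(d_\ell)$ and $\rjsp(c_r)=\rjsp(d_r)$ finishes the proof.

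I do not anticipate a genuine obstacle here. The most delicate step is the verification that $\ljsp$ is unchanged when one passes from $c_\ell$ down to its lower cover $d_\ell$ on the left boundary (and the mirror statement for $\rjsp$, $c_r$, $d_r$): this is precisely the place where the hypothesis that the coatoms are join-reducible, hence $c_\ell,c_r\notin J_0(D)$, is used, and it is also what is implicitly invoked when the two auxiliary inequalities are translated back into statements about $d_\ell$ and $d_r$. Everything else is routine bookkeeping with the boundary chains of the fixed planar diagram.
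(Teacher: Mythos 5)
Your proof is correct, but it takes a genuinely different route from the paper's. The paper argues through the meet $e=c_\ell\wedge c_r$: it shows $e\prec c_\ell$ and $e\prec c_r$ by lower semimodularity, invokes Kelly and Rival's planarity lemmas to conclude $d_\ell\neq e$ and to place $e$ strictly between $d_\ell$ and $d_r$, then uses the left/right characterization \eqref{eqtmGhGhb} to get $\ljsp(d_\ell)\nleq e$ and $\rjsp(d_r)\nleq e$, whence $\ljsp(d_\ell)\vee e=c_\ell$ and $\rjsp(d_r)\vee e=c_r$, and finally joins these two equalities. You bypass $e$ entirely: you prove $\ljsp(c_\ell)=\ljsp(d_\ell)$ and $\rjsp(c_r)=\rjsp(d_r)$ --- which is precisely \eqref{eqtxtdbhzrnwGhxP}, established by the same argument in the paper but only later, at the start of the proof of Lemma~\ref{lemmacqzTrGmN} --- together with the absorption inequalities $\rjsp(c_\ell)\leq\rjsp(c_r)$ and $\ljsp(c_r)\leq\ljsp(c_\ell)$, both obtained from the facts that the boundary chains are maximal chains, that $d_\ell$ and $d_r$ are the immediate predecessors of $c_\ell$ and $c_r$ on those chains, and that $c_\ell,c_r\notin J_0(D)$; then $1=c_\ell\vee c_r$ collapses to $\ljsp(c_\ell)\vee\rjsp(c_r)$ by \eqref{eqtmGhGha}. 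Your version is shorter and needs no external input beyond \eqref{eqtxt2cov} and \eqref{eqtmGhGha}; the paper's version is longer but its intermediate facts \eqref{eqtxtlhzsmszmHT} and \eqref{eqdbzfnrgTgH} are reused in the proof of Lemma~\ref{lemmacqzTrGmN}, so if one substituted your argument there, those facts would have to be re-derived separately.
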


\begin{proof} 
The  following  auxiliary statement is a transcript of Cz\'edli~\cite[Lemma 4.4]{czgdiagr}.
\begin{align}
\parbox{6.5cm}{For $x,y\in D$, 
$x$ is strictly to the left of $y$ iff $\ljsp(x)>\ljsp(y)$ and $\rjsp(x)<\rjsp(y)$.}\label{eqtmGhGhb}
\end{align}
By distributivity (in fact, by lower semimodularity), 
\begin{equation}
\text{$e\prec c_\ell$ and $e\prec c_r$.}
\label{eqtxtdtWcRhm}
\end{equation}
We claim that $d_\ell\neq e$. Suppose to the contrary that  $d_\ell=e$. 
Then, since $c_\ell\notin J(D)$ and $e=d_\ell$ belongs to $\lbound(D)$, we obtain that $c_\ell$ has a lower cover $u$ strictly to the right of $e$. By Kelly and Rival~\cite[Proposition 1.6]{kellyrival}, $e$ is strictly on the left and $c_r$ is strictly on the right of a maximal chain through $\set{u,c_\ell}$. This contradicts 
Kelly and Rival~\cite[Lemma 1.2]{kellyrival}
 since $e\prec c_r$.
Hence, $d_\ell\neq e$ and $e$ is strictly to the right of $d_\ell$. Thus, since $c_\ell\in\lbound(D)$ has a lower cover belonging to $\lbound(D)$ and it has at most two lower covers by \eqref{eqtxt2cov}, it follows that 
\begin{equation}
\parbox{9.5 cm}{$e$ is strictly to the right of $d_\ell$ and   $d_\ell\in \lbound(D)$. Similarly, $e$ is strictly to the left of $d_r$ and $d_r\in\rbound(D)$;}
\label{eqtxtlhzsmszmHT}
\end{equation}
see the first part of Figure~\ref{figexone}. 
Next, observe that $\ljsp(d_\ell)> \ljsp(e)$ and  $\rjsp(d_r)> \rjsp(e)$   by \eqref{eqtmGhGhb}. So, by the definition of left and right join supports,  
\begin{equation}
\ljsp(d_\ell)\nleq e\quad\text{and}\quad \rjsp(d_r)\nleq e.
\label{eqdbzfnrgTgH}
\end{equation}
Hence $e<\ljsp(d_\ell)\vee e \leq d_\ell \vee e  \leq c_\ell$, whereby \eqref{eqtxtdtWcRhm} gives that $\ljsp(d_\ell)\vee e=c_\ell$. Similarly,   $\rjsp(d_r)\vee e=c_r$. Using the equality from  \eqref{eqtmGhGha}
and the facts mentioned in the present paragraph, we obtain that 
\begin{equation*}
\begin{aligned}
&\ljsp(d_\ell)\vee\rjsp(d_r)=\ljsp(d_\ell)\vee \ljsp(e)\vee  \rjsp(e)\vee  \rjsp(d_r)
\cr
&\overset{\eqref{eqtmGhGha}}= \ljsp(d_\ell)\vee e\vee   \rjsp(d_r)= \bigl(\ljsp(d_\ell)\vee e\bigr)\vee \bigl(\rjsp(d_r)\vee e\bigr)
=c_\ell\vee c_r=1.
\end{aligned}
\label{eqdwJRbhT}
\end{equation*}
This completes the proof of Lemma~\ref{lemmaeqdczGnBw}.
\end{proof}

For later reference, note the following well-known consequence of distributivity:
\begin{equation}
(u\in J(D)\text{ and }u\leq v_0\jj\dots\jj v_{m-1})\Longrightarrow (\exists i<m)(u\leq v_i).
\label{eqdjlfkWsDbF}
\end{equation}

Next, we state and prove a technical lemma.

\begin{lemma}\label{lemmacqzTrGmN} Let $D$ be a finite distributive lattice satisfying  \eqref{eqtxtdzhGtrT}. Then the following four assertions hold.
\begin{enumeratei}
\item\label{lemmacqzTrGmNa} $J(D)\setminus\ideal e=\set{\ljsp(c_\ell),\rjsp(c_r)}$ and $\ljsp(c_\ell)\neq\rjsp(c_r)$.
\item\label{lemmacqzTrGmNb} $\set{\ljsp(c_\ell),\rjsp(c_r)}$ is the  set of maximal elements of $J(D)$ and $\ljsp(c_\ell)\parallel\rjsp(c_r)$.
\item\label{lemmacqzTrGmNc} $\ljsp(c_\ell)\nleq e\vee \rjsp(c_r)$ and  $\rjsp(c_r)\nleq e\vee \ljsp(c_\ell)$.
\item\label{lemmacqzTrGmNd} $e\nleq \ljsp(c_\ell)$ and  $e\nleq \rjsp(c_r)$.
\end{enumeratei}
\end{lemma}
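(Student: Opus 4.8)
The plan is to derive all four assertions from the structural facts already assembled, using the ``left/right support'' calculus, \eqref{eqdjlfkWsDbF}, and Lemma~\ref{lemmaeqdczGnBw}. First I would prove \eqref{lemmacqzTrGmNa}. Recall from \eqref{eqtxtlhzsmszmHT} that $e$ is strictly to the right of $d_\ell$ and strictly to the left of $d_r$; combining this with \eqref{eqtmGhGhb} gives $\ljsp(c_\ell)\geq\ljsp(d_\ell)>\ljsp(e)$ and $\rjsp(c_r)\geq\rjsp(d_r)>\rjsp(e)$, hence both $\ljsp(c_\ell)$ and $\rjsp(c_r)$ fail to lie below $e$. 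Conversely, suppose $u\in J(D)$ and $u\nleq e$. By \eqref{eqdmcvhgbGbn}, $u$ is on the left or on the right boundary. If $u\in\lbound(D)$, then $u\leq c_\ell$: indeed $c_\ell$ is the largest element of $\lbound(D)$ below $1$ other than possibly things above it, but more carefully, $u\leq\ljsp(u\vee e)$ forces, via $u\vee e\leq c_\ell$ (since $u$ on the left boundary and $u\nleq e$ puts $u\vee e$ strictly above $e$ on the left, hence $=c_\ell$ by \eqref{eqtxtdtWcRhm}), that $u\leq\ljsp(c_\ell)$; as $u$ is join-irreducible and maximal among left-boundary join-irreducibles below $c_\ell$ this yields $u=\ljsp(c_\ell)$. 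The right-boundary case is symmetric, giving $u=\rjsp(c_r)$. That $\ljsp(c_\ell)\neq\rjsp(c_r)$ follows because the first lies below $c_\ell$ and the second below $c_r$, while $c_\ell\wedge c_r=e$ and neither support is below $e$.

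Next, \eqref{lemmacqzTrGmNb}: since $\ljsp(c_\ell)\leq c_\ell\prec 1$ and $\rjsp(c_r)\leq c_r\prec 1$, neither is $1$, so both are join-irreducible proper elements; I would show they are maximal in $J(D)$ and that together they exhaust the maximal elements. If $w\in J(D)$ with $w\geq\ljsp(c_\ell)$, then $w\nleq e$ (as $\ljsp(c_\ell)\nleq e$), so by \eqref{lemmacqzTrGmNa} $w\in\set{\ljsp(c_\ell),\rjsp(c_r)}$; but $w\geq\ljsp(c_\ell)$ and $\ljsp(c_\ell)\neq\rjsp(c_r)$ with $\rjsp(c_r)\nleq c_\ell\geq\ljsp(c_\ell)$ would force $w=\ljsp(c_\ell)$, so $\ljsp(c_\ell)$ is maximal; symmetrically for $\rjsp(c_r)$. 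For the converse, any maximal $m\in J(D)$ satisfies $m\nleq e$ — otherwise $m\leq e\prec c_\ell\prec 1$, but then $m$ would be below $\ljsp(c_\ell)$ by the argument of \eqref{lemmacqzTrGmNa} applied to... actually simpler: if every maximal join-irreducible were $\leq e$ then $1=\bigvee J(D)\leq e$, a contradiction, and more precisely a maximal $m$ with $m\leq e$ cannot be maximal since it would be dominated by $\ljsp(d_\ell)$ or $\rjsp(d_r)$ which are $\nleq e$ by \eqref{eqdbzfnrgTgH}. So $m\nleq e$, hence $m\in\set{\ljsp(c_\ell),\rjsp(c_r)}$ by \eqref{lemmacqzTrGmNa}. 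Finally $\ljsp(c_\ell)\parallel\rjsp(c_r)$: they are distinct maximal elements of the ordered set $J(D)$, hence incomparable.

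For \eqref{lemmacqzTrGmNc}, suppose for contradiction that $\ljsp(c_\ell)\leq e\vee\rjsp(c_r)$. Since $\ljsp(c_\ell)\in J(D)$, \eqref{eqdjlfkWsDbF} would give either $\ljsp(c_\ell)\leq\rjsp(c_r)$, contradicting \eqref{lemmacqzTrGmNb}, or... wait, $e$ is not join-irreducible in general, so I apply \eqref{eqdjlfkWsDbF} after writing $e$ as a join of join-irreducibles: $e=\bigvee\{u\in J(D):u\leq e\}$, and each such $u$ satisfies $u\leq e$; then $\ljsp(c_\ell)\leq e\vee\rjsp(c_r)$ together with \eqref{eqdjlfkWsDbF} forces $\ljsp(c_\ell)\leq\rjsp(c_r)$ or $\ljsp(c_\ell)\leq u\leq e$ for some such $u$, both impossible by \eqref{lemmacqzTrGmNb} and \eqref{lemmacqzTrGmNa} respectively. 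The second inequality is symmetric. For \eqref{lemmacqzTrGmNd}, I argue that $e\leq\ljsp(c_\ell)$ would give $c_\ell=\ljsp(c_\ell)\vee\rjsp(c_\ell)$; but also $c_\ell=\ljsp(c_\ell)\vee e$ was shown in the proof of Lemma~\ref{lemmaeqdczGnBw}, so under $e\leq\ljsp(c_\ell)$ we would get $c_\ell=\ljsp(c_\ell)\in J(D)$, contradicting \eqref{eqtxtdzhGtrT} that $c_\ell$ is join-reducible; symmetrically $e\nleq\rjsp(c_r)$. The main obstacle I anticipate is pinning down \eqref{lemmacqzTrGmNa} cleanly — specifically verifying that a left-boundary join-irreducible not below $e$ must actually equal $\ljsp(c_\ell)$, which requires care with the interplay between ``largest left-boundary join-irreducible in an ideal'' and the covering relations $d_\ell\prec c_\ell$, $e\prec c_\ell$; once \eqref{lemmacqzTrGmNa} is secured, parts \eqref{lemmacqzTrGmNb}–\eqref{lemmacqzTrGmNd} are short consequences of distributivity via \eqref{eqdjlfkWsDbF}.
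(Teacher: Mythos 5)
Your arguments for parts \eqref{lemmacqzTrGmNc} and \eqref{lemmacqzTrGmNd} are correct (and \eqref{lemmacqzTrGmNd} via $c_\ell=\ljsp(c_\ell)\vee e$ is a nice shortcut), but your overall plan reverses the paper's order in a way that creates a real gap. You try to prove \eqref{lemmacqzTrGmNa} first and deduce the rest from it, whereas the paper proves \eqref{lemmacqzTrGmNb}, \eqref{lemmacqzTrGmNd}, and \eqref{lemmacqzTrGmNc} first and only then attacks \eqref{lemmacqzTrGmNa}. The gap is exactly at the step you yourself flag as ``the main obstacle'': for $u\in J(D)\cap\lbound(D)$ with $u\nleq e$ you correctly obtain $u\leq\ljsp(c_\ell)$, but you then conclude $u=\ljsp(c_\ell)$ on the grounds that ``$u$ is join-irreducible and maximal among left-boundary join-irreducibles below $c_\ell$''. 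Maximality is not a property you know about $u$ --- it is the \emph{definition} of $\ljsp(c_\ell)$ --- so what you actually need, and never prove, is that every join-irreducible strictly below $\ljsp(c_\ell)$ lies in $\ideal e$. Since your proof of \eqref{lemmacqzTrGmNb} routes through \eqref{lemmacqzTrGmNa}, it inherits the gap, even though \eqref{lemmacqzTrGmNb} has a one-line direct proof from $1=\ljsp(c_\ell)\vee\rjsp(c_r)$ (Lemma~\ref{lemmaeqdczGnBw} plus $\ljsp(c_\ell)=\ljsp(d_\ell)$ and $\rjsp(c_r)=\rjsp(d_r)$) and \eqref{eqdjlfkWsDbF}.

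The missing ingredient is a covering argument, and it is the reason the paper needs \eqref{lemmacqzTrGmNc} and \eqref{lemmacqzTrGmNd} \emph{before} \eqref{lemmacqzTrGmNa}: those two parts give $e\parallel\ljsp(c_\ell)$, hence $e<e\vee\ljsp(c_\ell)\leq c_\ell$ and, by \eqref{eqtxtdtWcRhm}, $e\prec c_\ell=e\vee\ljsp(c_\ell)$; lower semimodularity (a consequence of distributivity) then yields $e\wedge\ljsp(c_\ell)\prec\ljsp(c_\ell)$. Since $\ljsp(c_\ell)$ is join-irreducible, it has a unique lower cover, which must therefore be $e\wedge\ljsp(c_\ell)\leq e$; hence every $u\in J(D)$ with $u<\ljsp(c_\ell)$ satisfies $u\leq e$. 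This, with its mirror image on the right boundary and \eqref{eqdmcvhgbGbn}, is what closes your gap. Note that \eqref{lemmacqzTrGmNc} and \eqref{lemmacqzTrGmNd} can be proved without \eqref{lemmacqzTrGmNa}: the inputs they need, namely $\ljsp(c_\ell)\nleq e$, $\rjsp(c_r)\nleq e$, and $\ljsp(c_\ell)\parallel\rjsp(c_r)$, all follow from \eqref{eqdbzfnrgTgH} and Lemma~\ref{lemmaeqdczGnBw} alone, so the correct dependency order is \eqref{lemmacqzTrGmNb}, \eqref{lemmacqzTrGmNd}, \eqref{lemmacqzTrGmNc}, and only then \eqref{lemmacqzTrGmNa}.
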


\begin{proof}  We use the notation from  \eqref{eqtxtdzhGtrT} to \eqref{eqtmGhGha}. By \eqref{eqtxtdzhGtrT},   $c_\ell\notin J_0(D)$. Hence, $d_\ell\prec c_\ell$ yields that $\lbound(D)\cap J_0(D)\cap\ideal c_\ell=\lbound(D)\cap J_0(D)\cap\ideal d_\ell$. Thus,
\begin{equation}
\text{$\ljsp(c_\ell)=\ljsp(d_\ell)$. We obtain similarly that $\rjsp(c_r)=\rjsp(d_r)$.}
\label{eqtxtdbhzrnwGhxP}
\end{equation}
Lemma~\ref{lemmaeqdczGnBw} and  \eqref{eqtxtdbhzrnwGhxP} gives that $1 =\ljsp(c_\ell)\vee\rjsp(c_r)$, whereby  $\ljsp(c_\ell)\parallel \rjsp(c_r)$. So,
since  $J(D)\subseteq \ideal 1=\ideal(\ljsp(c_\ell)\vee\rjsp(c_r))$,  \eqref{eqdjlfkWsDbF} implies  \ref{lemmacqzTrGmN}\eqref{lemmacqzTrGmNb}. 
If we had that $e\leq \ljsp(c_\ell)$, then \eqref{eqtxtdbhzrnwGhxP}  and $\ljsp(d_\ell)\leq d_\ell$  would lead to $e\leq d_\ell$, contradicting  \eqref{eqtxtlhzsmszmHT}. Hence, 
$e\nleq \ljsp(c_\ell)$ and, similarly, $e\nleq \rjsp(c_r)$. This proves \ref{lemmacqzTrGmN}\eqref{lemmacqzTrGmNd}.
For the sake of contradiction, suppose that  \ref{lemmacqzTrGmN}\eqref{lemmacqzTrGmNc} fails. Let, say, $\ljsp(c_\ell)\leq e\vee \rjsp(c_r)$.  Thus, since $\ljsp(c_\ell)=\ljsp(d_\ell)\nleq e$ by \eqref{eqdbzfnrgTgH} and \eqref{eqtxtdbhzrnwGhxP}, we obtain by  \eqref{eqdjlfkWsDbF} and \eqref{eqtxtdbhzrnwGhxP} that 
$\ljsp(d_\ell)=\ljsp(c_\ell)\leq \rjsp(c_r)=\rjsp(d_r)$, contradicting Lemma~\ref{lemmaeqdczGnBw}. Hence, \ref{lemmacqzTrGmN}\eqref{lemmacqzTrGmNc}  holds.
Finally,  \ref{lemmacqzTrGmN}\eqref{lemmacqzTrGmNc} and \ref{lemmacqzTrGmN}\eqref{lemmacqzTrGmNd} give that $e\parallel \ljsp(c_\ell)$, whereby $e<e\vee \ljsp(c_\ell)\leq c_\ell$. So  \eqref{eqtxtdtWcRhm}  gives that 
$e\prec c_\ell =e\vee \ljsp(c_\ell)$. By distributivity (in fact, by lower semimodularity), $g_\ell:=e\wedge \ljsp(c_\ell)\prec  \ljsp(c_\ell)$. Similarly, $g_r:=e\wedge \rjsp(c_r)\prec  \rjsp(c_r)$.  Combining these covering relations with 
 \ref{lemmacqzTrGmN}\eqref{lemmacqzTrGmNb}, we obtain that
\begin{equation}
\parbox{10cm}{With the exception of $\ljsp(c_\ell)$, all join-irreducible elements of $\lbound(D)$ are in $\ideal e$. Similarly for $\rjsp(c_r)$ and $\rbound(D)$.}
\label{eqpbxdzkKknPws}
\end{equation}
Clearly, \ref{lemmacqzTrGmN}\eqref{lemmacqzTrGmNa} follows from   \eqref{eqdmcvhgbGbn} and \eqref{eqpbxdzkKknPws}.
\end{proof}

\section{Chain-representability}\label{sectionchainrepr}
The first paragraph in the following proof is based on G.\ Gr\"atzer's idea; see Theorem~\ref{thmsofar}\eqref{thmsofarb}.

\begin{proof}[Proof of Proposition~\ref{propcLzGb}]\ 
In order to prove the necessity of the condition formulated in the proposition, assume that $D$ is fully chain-representable. 
For the sake of contradiction, suppose that $D$ is not planar. It is well known from, say, Lemma 3-4.1 and the paragraph preceding it in Cz\'edli and Gr\"atzer~\cite{czgggchapter} that 
\begin{equation}
\parbox{10cm}{every non-planar finite distributive lattice contains a three-element antichain that consists of join-irreducible elements.}
\label{eqpbxthmntChn}
\end{equation}
Thus, we can pick a three-element antichain $\set{p_0,p_1,p_2}\subseteq J(D)$.
 Let $p=p_0\vee p_1\vee p_2$ and  $Q=J^+(D)\cup \set{p}$. Since $D$ is fully chain-representable, we can take a $J(D)$-colored chain $\tripl C\szn D$ such that 
$Q=\srep(C,\szn,D)$. 
Take a maximal interval
\[[x,y]= \set{x=x_0 \prec x_1\prec \dots\prec x_n=y}
\]
in $C$ such that $\erep([x,y])=p$. 
Letting $r_i:=\szn([x_{i},x_{i+1}])\in J(D)$, 
we have that $\bigvee_{i<n} r_i=p=p_0\vee p_1\vee p_2$.
By \eqref{eqdjlfkWsDbF}, each of $p_0$, $p_1$, and $p_2$ is less than or equal to some of the $r_i$, $i<n$. By the same reason, each of the $r_i$, $i<n$,  is less than or equal to some of $p_0$, $p_1$, and $p_2$. Therefore, since $\set{p_0,p_1,p_2}$ is an antichain,
each of  $p_0$, $p_1$, and $p_2$ occurs among the $r_i$,  $i<n$. Without loss of generality, we can assume that $p_0$ or $p_2$ occurs before $p_1$.  
Take a maximal subinterval $[x_i,x_j]$ of $[x,y]$ such that 
$\erep([x_i,x_{j}]) = p_1$; we have that $0<i$ since 
$p_0$ or $p_2$ occurs before $p_1$. Then $r_{i-1}=\szn([x_{i-1},x_{i}])\nleq p_1$, whereby 
\[p_1<r_{i-1}\vee p_1= \erep([x_{i-1},x_{j}])\in \srep(C,\phi) = Q.
\]
Since $r_{i-1}$ is less than or equal to some of $p_0$, $p_1$, and $p_2$ but $r_{i-1}\nleq p_1$, we can assume that $r_{i-1}\leq p_0$. Thus, since $\set{p_0,p_1,p_2}$ is an antichain, $r_{i-1} \neq r_{i-1}\vee p_1$. Hence, 
$r_{i-1}\vee p_1$ is join-reducible, and the choice
of $Q$ gives that $r_{i-1}\vee p_1\in\set{p,1}$.
So $p_2\leq r_{i-1}\vee p_1$, and  \eqref{eqdjlfkWsDbF} yields that $p_2\leq p_1$ or $p_2\leq r_{i-1}\leq p_0$,
which contradicts the fact that $\set{p_0,p_1,p_2}$ is an antichain. This proves that $D$ is planar.  

Next, striving for a contradiction,  suppose  that \eqref{eqtxtdzhGtrT} holds. With the notation given from \eqref{eqtxtdzhGtrT} to \eqref{eqtmGhGha},  let $Q:=J^+(D)\cup\set{e}$; this need not be the same $Q$ as above. 
Since there are two coatoms,  $1_D\notin J(D)$. Since $D$ is fully chain-representable, there is a 
$J(D)$-colored chain $\tripl C\szn D$ such that $Q=\srep(C,\szn,D)$. Since $e\in Q$, there is a maximal interval $[x,y]$ in $C$ such that $e=\erep([x,y])$. Since $e\neq 1_D=\erep(C)$, either $0_C<x$, or $y<1_C$;
by duality, we can assume that $y<1_C$. 
Let $z$ be the cover of $y$ in $C$, and let  
$p:=\szn([y,z])\in J(D)$.
 By the maximality of $[x,y]$, we have that $e<e\vee p$. Since there are only three elements, $1=c_\ell\vee c_r$, $c_\ell=d_\ell \vee e$, and $c_r=d_r\vee e$ strictly above $e$ and they  are join-reducible, $e\vee p\notin J(D)$. Thus, since $e\vee p=\erep([x,z])\in \srep(C,\szn, D)=Q$, it follows that $e\vee p=1$. Hence, $\ljsp(d_\ell)\leq e\vee p$, whereby \eqref{eqdbzfnrgTgH} and \eqref{eqdjlfkWsDbF} give that $\ljsp(d_\ell)\leq p$. Since $\rjsp(d_r)\leq p$ follows similarly, Lemma~\ref{lemmaeqdczGnBw} gives that $1_D=p\in J(D)$.  
This is a  contradiction,  proving the 
necessity part of Proposition~\ref{propcLzGb}.

In order to prove the sufficiency part, assume that $D$ is planar and it has at most one join-reducible coatom.
Let $Q\subseteq D$ such that \eqref{eqTzhjP} holds.
We need to find a $J(D)$-colored chain that represents $Q$.  We can assume that $|D|\geq 2$ since otherwise the one-element $J(D)$-colored chain represents $Q$. 

If $1\in J(D)$, then we let $c:=1$; then the principal filter $\filter c=\set{x\in D: c\leq x}$ is clearly a subset of $Q$. If $1\notin J_0(D)$,  then  there are exactly two coatoms by \eqref{eqtxt2cov} and at least one of them is join-irreducible by our assumption; in this case, let $c$ be a join-irreducible coatom and we still have that $\filter c\subseteq Q$.

Let $u_1,u_2,\dots, u_m$ be a repetition-free list of all elements of $J(D)$. Similarly, let $x_1,x_2,\dots, x_k$ be a repetition-free list of all elements of $Q\setminus J(D)$; this list can be empty. 
Define $\tripl C\szn D$ such that  $\length(C)= 2m+3k-1$ and the colors of the edges,  from bottom to top, are as follows:
\begin{equation*}
\begin{aligned}
 u_1,{} &{}c, u_2,\dots, c, u_m,
c , \ljsp(x_1), \rjsp(x_1),
c ,\ljsp(x_2), \rjsp(x_2),c,
\cr
&
\ljsp(x_3), \rjsp(x_3), 
c ,\ljsp(x_4), \rjsp(x_4),c,
\dots,
\ljsp(x_k), \rjsp(x_k);
\end{aligned}
\end{equation*}
see the third part of Figure~\ref{figexone} where  the map  $\szn$ is given by labeling. The equation in \eqref{eqtmGhGha} makes it clear that $Q\subseteq \srep(C,\szn,D)$. In order to see the converse inclusion, take an arbitrary element of $\srep(C,\szn,D)$, that is, take an arbitrary $I\in\Intv(C)$ and consider $\erep(I)$; we need to show that $\erep(I)\in Q$. This is clear if $\length(I)\leq 1$.  For $\length(I)\geq 2$, either
$\erep(I)$ is in the filter $\filter c$,
which is a subset of $Q$, or $\erep(I)= \ljsp(x_i)\vee\rjsp(x_i)$ and \eqref{eqtmGhGha} gives that $\erep(I)=x_i\in Q$. Therefore, $Q=\srep(C,\szn,D)$, as required.
This completes the proof of Proposition~\ref{propcLzGb}.
\end{proof}

\section{Dealing with congruences of an algebra}\label{sectionconA} 

\begin{lemma}\label{lemmadznPbWqbns} 
Every  fully \aaerep-representable finite distributive lattice is planar.
\end{lemma}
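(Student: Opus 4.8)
The plan is to derive this from the corresponding chain-representability statement, Proposition~\ref{propcLzGb}, by showing that full \aaerep-representability forces full chain-representability. Concretely, I would argue by contraposition: suppose $D$ is a finite distributive lattice that is \emph{not} planar; by \eqref{eqpbxthmntChn} it contains a three-element antichain $\set{p_0,p_1,p_2}\subseteq J(D)$. Set $p=p_0\jj p_1\jj p_2$ and $Q=J^+(D)\cup\set p$, so that $Q$ satisfies \eqref{eqTzhjP}. The goal is to show that no algebra $A$ represents $Q\subseteq D$, which contradicts full \aaerep-representability and hence proves the lemma.

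The key tool is that in \emph{any} algebra $A$, every congruence is a join of principal congruences, so $\Con(A)$ is join-generated by $\Princ(A)$; more importantly, principal congruences behave well under the join operation in the sense that $\con(a,b)\jj\con(c,d)$ is again a join of two principal congruences, and — crucially — if $\theta=\con(a,b)$ is principal and $\theta=\alpha\jj\beta$, then $\theta=\con(a,b)$ is ``generated by a single pair'' which can be ``moved'' along: this is where the transitivity/finite-reachability argument for principal congruences enters. The concrete combinatorial fact I want is the algebra analogue of the chain-interval computation: if $\con(a,b)$ is a principal congruence that equals a join $r_0\jj\dots\jj r_{n-1}$ of other congruences, then there is a ``path'' $a=z_0,z_1,\dots,z_n=b$ with $\con(z_i,z_{i+1})\le r_{\sigma(i)}$ and the partial joins $\con(z_0,z_i)$ realized as principal congruences lying in a monotone chain. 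Using this on $\theta=\con(a,b)$ with $\phi(\theta)=p$ and $p=p_0\jj p_1\jj p_2$, I would extract a principal congruence whose image is $p_1$, then prepend one step whose colour $r$ satisfies $r\le p_0$ (say) and $r\nleq p_1$; then $\phi$ of the enlarged principal congruence is $r\jj p_1$, which is join-reducible (since $\set{p_0,p_1,p_2}$ is an antichain), hence must lie in $\set{p,1}$, hence dominates $p_2$, giving $p_2\le p_1$ or $p_2\le r\le p_0$ — a contradiction. This is exactly the argument already carried out in the proof of Proposition~\ref{propcLzGb}, transported from prime intervals of a chain $C$ to the ``principal-congruence generation sequence'' inside $A$.

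So the real content of the proof is establishing the transport lemma: principal congruences of an arbitrary algebra $A$ (with $\Con(A)$ finite) admit the same kind of ``coloured chain'' description that Gr\"atzer's $J(D)$-coloured chains give. I expect this to be organized as: (1) recall that $\con(a,b)=\bigvee\set{\con(a',b'):\ (a',b')\text{ in the ``reflexive-symmetric-transitive-substitution'' closure}}$, and extract from a finite join $\con(a,b)=r_0\jj\dots\jj r_{n-1}$ a finite sequence $a=z_0,\dots,z_m=b$ of elements of $A$ with each $\con(z_j,z_{j+1})$ below some $r_i$; (2) observe the partial sums $\theta_j:=\con(z_0,z_0)\jj\dots$ hmm, more precisely $\theta_j:=\bigvee_{t<j}\con(z_t,z_{t+1})$ form an increasing chain of congruences with $\theta_m=\con(a,b)$ and $\theta_{j+1}$ covered-or-equal relationship controlled by the colours; (3) note each $\theta_j$ with the right indexing is itself principal? — actually one must be slightly careful: $\theta_j$ need not be principal, but $\con(z_0,z_j)\le\theta_j$ and $\con(z_0,z_j)\jj$ (something) recovers what is needed. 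The main obstacle is precisely pinning down which intermediate congruences are guaranteed to be \emph{principal} (as opposed to merely compact) and landing inside $\Princ(A)=\phi^{-1}(Q)$; I would handle this by choosing $(z_j)$ so that consecutive differences have colours among the $p_i$ (possible since the $r_i$ each lie below some $p_i$ and we may refine), and then work with $\con(z_0,z_j)$ directly — these \emph{are} principal, their images under $\phi$ lie in $Q$, and the join of images is forced. I would then double-check the ``first place $p_1$ appears after a $p_0$ or $p_2$'' bookkeeping exactly as in Proposition~\ref{propcLzGb}. The second, easier, half of the lemma's statement (one join-reducible coatom) is not needed here since Lemma~\ref{lemmadznPbWqbns} only asserts planarity; the coatom condition is presumably handled in a companion lemma by the same method applied to $Q=J^+(D)\cup\set e$.
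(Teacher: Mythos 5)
Your overall setup (contraposition via a three-element antichain $\set{p_0,p_1,p_2}\subseteq J(D)$, the test set $Q=J^+(D)\cup\set{p}$ with $p=p_0\jj p_1\jj p_2$, and a finite sequence of elements of $A$ joining a generating pair of the congruence representing $p$) coincides with the paper's. The gap is in the endgame, which rests on a ``transport lemma'' that does not hold. For an arbitrary algebra, the only relation between the principal congruence of a subpath and its steps is the inequality $\con(z_i,z_j)\leq\con(z_i,z_{i+1})\jj\dots\jj\con(z_{j-1},z_j)$; the reverse inequality fails in general, and $\con(z_i,z_j)$ need not even contain the pair $\pair{z_i}{z_{i+1}}$. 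Consequently the element of $Q$ you produce, namely the image of $\con(z_{i-1},z_j)$, is only known to lie \emph{below} $r\jj p_1$; it can perfectly well be join-irreducible without being $p$ or $1$, and then no violation of the antichain property follows. Your final step ``$r\jj p_1$ is join-reducible, hence in $\set{p,1}$, hence dominates $p_2$'' therefore needs the exact equality of the represented element with $r\jj p_1$, which is available for $\erep$ of an interval of a colored chain but not for a principal congruence of an algebra. You flag this obstacle yourself (``one must be slightly careful\dots\ recovers what is needed'') but never close it.

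The paper closes it by turning the failure of that equality into the contradiction: it takes a \emph{shortest} sequence $a=x_0,\dots,x_n=b$ with each $\pair{x_i}{x_{i+1}}$ in $p_0\cup p_1\cup p_2$, shows that each $p_t$ is attained exactly as some $\con(x_i,x_{i+1})$, and at the critical junction observes that $\con(x_{j-1},x_{j+1})$ is a \emph{nonzero principal} congruence lying below $p_0\jj p_1<p$; since $Q=\Princ(A)$ contains no join-reducible element in that range, this congruence is join-irreducible, hence below $p_0$ or below $p_1$ by \eqref{eqdjlfkWsDbF}, so $x_j$ can be deleted from the sequence --- contradicting the minimality of $n$. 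That minimality-shortening step is what your sketch is missing; without it (or some substitute) the proposal does not constitute a proof.
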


\begin{proof} For the sake of contradiction, suppose that $D$ is a \emph{non-planar} fully \aaerep-representable finite distributive lattice. Pick a three-element antichain $\set{\balpha,\bbeta,\bgamma}\subseteq J(D)$; see \eqref{eqpbxthmntChn}. Let $\bmu=\balpha\vee\bbeta\vee\bgamma$ and $Q=J^+(D)\cup\set\bmu$. By the full \aaerep-representability of $D$, we can assume that $D=\Con(A)$ and $Q=\Princ(A)$ for an algebra $A$. Then $\bmu$ is a principal congruence of $A$, whereby we can pick elements $a,b\in A$ such that $\con(a,b)$, the \emph{principal congruence generated} by the pair $\pair a b$, is $\bmu$; in notation, $\bmu=\con(a,b)$. Using that $\bmu=\balpha\vee\bbeta\vee\bgamma$, we can pick a \emph{shortest} sequence $a=x_0$, $x_1$, \dots, $x_{n-1}$, $x_n=b$ of elements of $A$ such that $\pair{x_i}{x_{i+1}}$ belongs to $\balpha\cup\bbeta\cup\bgamma$ for all (non-negative) $i<n$. That is, for all $i<n$, 
\begin{equation}
\con(x_i, x_{i+1})\leq \balpha,\quad \con(x_i, x_{i+1})\leq \bbeta,\quad\text{or}\quad \con(x_i, x_{i+1})\leq \bgamma.
\label{eqfPzltmmS}
\end{equation}
We claim that
\begin{equation}
\text{each of $\balpha$, $\bbeta$, and $\bgamma$ occurs in \eqref{eqfPzltmmS} for some $i<n$.}
\label{eqtxtChclcRs}
\end{equation}
Suppose to the contrary that, say, $\bgamma$ does not occur. Then $\bgamma\leq\bmu=\balpha\vee\bbeta$ and \eqref{eqdjlfkWsDbF} give that $\bgamma\leq\balpha$ or $\bgamma\leq\bbeta$, which is impossible since $\set{\balpha,\bbeta,\bgamma}$ is an antichain. This shows the validity of \eqref{eqtxtChclcRs}, and shows also that $\balpha\vee \bbeta<\bmu$.

Our sequence yields that $\con(a,b)\leq \bigvee_{i<n}\con(x_i, x_{i+1})$. Thus, 
$\balpha\leq \bmu=\con(a,b)\leq \bigvee_{i<n}\con(x_i, x_{i+1})$. 
Hence, \eqref{eqdjlfkWsDbF} gives an $i_\balpha<n$ such that $\balpha\leq \con(x_{i_\balpha}, x_{i_\balpha+1})$. Combining this inequality with \eqref{eqfPzltmmS} and taking into account that
$\set{\balpha,\bbeta,\bgamma}$ is an antichain, we obtain that  
$\balpha = \con(x_{i_\balpha},  x_{i_\balpha+1})$. Similarly,  $\bbeta = \con(x_{i_\bbeta},  x_{i_\bbeta+1})$ and $\bgamma = \con(x_{i_\bgamma},  x_{i_\bgamma+1})$ for some $i_\bbeta< n$ and $i_\bgamma<n$.
Since $\balpha$, $\bbeta$, and $\bgamma$ play a symmetric role, we can assume that $i_\balpha$
is between $i_\bbeta$ and $i_\bgamma$. Let $j$ denote the smallest non-negative number such that $j\leq i_\balpha$ and 
$\con(x_{s},  x_{s+1})\leq \balpha$
for all $s\in [j,i_\balpha]:=\set{j,j+1,\dots, i_\balpha}$. By $\con(x_{i_\balpha},  x_{i_\balpha+1})=\balpha$, this $j$ exists.
Since  $\con(x_{i_\bbeta},  x_{i_\bbeta+1})=\bbeta\nleq\balpha$,  $\con(x_{i_\bgamma},  x_{i_\bgamma+1})=\bgamma\nleq\balpha$,  and  $i_\balpha$
is between $i_\bbeta$ and $i_\bgamma$, it follows that $j>0$. So we can consider 
$\con(x_{j-1},  x_{j})$, which is not in $\ideal\balpha:=\set{\xi\in D: \xi\leq\balpha}$.
By \eqref{eqfPzltmmS} and since the role of $\bbeta$ and $\bgamma$ is symmetric, we can assume that  
$\con(x_{j-1},  x_{j})\leq \bbeta$. 
The minimality of the length $n$  of our sequence implies that $x_{j-1}\neq x_{j+1}$. Hence 
\begin{equation} 0<\con(x_{j-1},x_{j+1})\leq \con(x_{j-1}, x_{j})\vee \con(x_{j}, x_{j+1})\leq \bbeta\vee\balpha<\bmu
\label{eqdkbzhNmYxZ}
\end{equation}
and the choice of $Q=\Princ(A)$ imply that the principal congruence $\con(x_{j-1},x_{j+1})$ is join-irreducible. Hence, applying \eqref{eqdjlfkWsDbF} to \eqref{eqdkbzhNmYxZ}, we obtain that $\con(x_{j-1},x_{j+1})\leq  \bbeta$ or $\con(x_{j-1},x_{j+1})\leq\balpha$. Thus, omitting $x_j$ from our sequence, we obtain a shorter sequence that still satisfies \eqref{eqfPzltmmS}. This contradicts the minimality of $n$ and proves the lemma.
\end{proof}

\begin{lemma}\label{lemmacxGnHjH}
Every  fully \aaerep-representable finite distributive lattice has at most one join-reducible coatom.
\end{lemma}

\begin{proof} For the sake of contradiction, suppose that $D$ is a fully \aaerep-representable finite distributive lattice that has at least two join-reducible coatoms.
By Lemma~\ref{lemmadznPbWqbns}, $D$ is planar. It follows from \eqref{eqtxt2cov} that  $D$ has exactly two coatoms, whereby  \eqref{eqtxtdzhGtrT} holds. However, since we are going to compute with congruences, let $\epsilon=e$, $\balpha=\ljsp(c_\ell)$, and $\bbeta=\rjsp(c_r)$; see the diagram on the left of Figure~\ref{figexone}. Let $Q=J^+(D)\cup\set{\epsilon}$. The full \aaerep-representability of $D$ allows us to  assume that $D=\Con(A)$ and $Q=\Princ(A)$ hold for an algebra $A$. We claim that for every $u\neq v\in A$,
\begin{equation}
\parbox{9cm}{there exists a finite sequence $u=w_0, w_1,\dots, w_k=v$ of elements of $A$ such that $\con(w_i,w_{i+1})\in J(D)$ for all $i<k$.}
\label{eqpbxzhBnmXgrp}
\end{equation}
Note that this is valid for an arbitrary finite lattice $D$, not only for a planar distributive one.
We prove \eqref{eqpbxzhBnmXgrp} by induction on $\hei(\con(u,v))$, where $\hei$ is the \emph{height function} on $D$, that is, for $x\in D$, $\hei(x)$ is the length of $\ideal x$. Since $u\neq v$, the smallest possible value of $\hei(\con(u,v))$ is 1. So, to deal with the base of the induction, assume that $\hei(\con(u,v))=1$. Then $\con(u,v)$ is an atom, whereby $\con(u,v)\in J(D)$ and \eqref{eqpbxzhBnmXgrp} holds with $\tuple{k,w_0,\dots,w_k}:=\tuple{1,u,v}$.
Next, to perform the induction step, assume that $\hei(\con(u,v))>1$. We can also assume that $\con(u,v)\notin J(D)$, because otherwise 
 $\tuple{k,w_0,\dots,w_k}:=\tuple{1,u,v}$ 
does the job again. Then there are $\bgamma,\bdelta\in D=\Con(A)$ such that $\hei(\bgamma)<\hei(\con(u,v))$, $\hei(\bdelta)<\hei(\con(u,v))$, and $\con(u,v)=\bgamma\vee\bdelta$. Therefore, there is a \emph{shortest} sequence $u=s_0,s_1,\dots,s_{m-1}, s_m=v$ of elements of $A$ such that $\pair{s_j}{s_{j+1}}\in\bgamma\cup \bdelta$ for all $j<m$. That is, for all $j<m$, $\con(s_j,s_{j+1})\leq\bgamma$ or $\con(s_j,s_{j+1})\leq\bdelta$. Thus, $\hei(\con(s_j,s_{j+1}))\leq\max\set{\hei(\bgamma),\hei(\bdelta)}<\hei(\con(u,v))$ for all $j<m$. Furthermore, $s_j\neq s_{j+1}$ for all $j<m$ since we took a shortest sequence. Hence, the induction hypothesis yields a sequence of type \eqref{eqpbxzhBnmXgrp} from $s_j$ to $s_{j+1}$ for all $j<m$. Concatenating these sequences, we obtain a sequence from $u$ to $v$ as  required by \eqref{eqpbxzhBnmXgrp}. This completes the induction and proves  \eqref{eqpbxzhBnmXgrp}. 

Next, we claim that 
\begin{equation}
\text{for every block $U$ of $\epsilon$, we have that $U^2\subseteq \balpha$ or $U^2\subseteq \bbeta$.}
\label{eqtxtznmrNpk}
\end{equation}
Since this is evident for a singleton $U$, assume that $|U|>1$. However, $U\neq A$ since $\epsilon\neq 1_D=1_{\Con(A)}$. So we an pick an element $x\in A\setminus U$ and another element $y\in U$. By \eqref{eqpbxzhBnmXgrp}, there is a finite sequence of elements from $x$ to $y$ such that any two  consecutive elements in this sequence generate a join-irreducible congruence. This sequence begins outside $U$ and terminates in $U$, whereby there are two consecutive elements in the sequence such that first is outside $U$ but the second is in $U$. Changing the notation if necessary, we can assume that these two elements are $x$ and $y$. Thus, $x\in A\setminus U$ and $y\in U$ are chosen so that 
$\con(x,y)\in J(D)$. It follows from Lemma~\ref{lemmacqzTrGmN}\eqref{lemmacqzTrGmNb} that 
$\con(x,y)\leq\balpha$ or $\con(x,y)\leq\bbeta$. Since the role of $\balpha$ and $\bbeta$ is symmetric, we can assume that $\con(x,y)\leq\balpha$.   Now let $z$ be an arbitrary element of $U$. Since $\con(x,z)$ is a principal congruence, the choice of $Q=\Princ(A)$ and $x\neq z$ give that $\con(x,z)\in \set{1_D,\epsilon}\cup J(D)$. 
If we had that 
$\con(x,z)=1_D=1_{\Con(A}$, then $\bbeta\leq 1_D=\con(x,z)\leq \con(x,y)\vee \con(y,z) \leq \balpha\vee \epsilon$, which would contradict Lemma~\ref{lemmacqzTrGmN}\eqref{lemmacqzTrGmNc}. Since $z$ is in $U$ but $x$ is not, $\con(x,z)\notin\ideal\epsilon$. In particular, $\con(x,z)\neq \epsilon$ and we obtain that $\con(x,z)\in J(D)$. In fact, $\con(x,z)\in J(D)\setminus\ideal \epsilon$, and it follows  from  Lemma~\ref{lemmacqzTrGmN}\eqref{lemmacqzTrGmNa} that $\con(x,y)$ is $\balpha$ or $\bbeta$. 
If we had that $\con(x,z)$ is $\bbeta$, then $\bbeta=\con(x,z)\leq \con(x,y)\vee\con(y,z)\leq \balpha\vee \epsilon$ would contradict Lemma~\ref{lemmacqzTrGmN}\eqref{lemmacqzTrGmNc}. Hence, $\con(x,z)=\balpha$ and $\pair yz\in \con(y,x)\vee\con(x,z)= \con(x,y)\vee\balpha=\balpha$.
Since $z$ was an arbitrary element of $U$, the required inclusion $U^2\subseteq \balpha$ follows by the transitivity of $\balpha$. This proves \eqref{eqtxtznmrNpk}.

Finally, since $\epsilon\in Q=\Princ(A)$,
we can pick $a,b\in A$ such that $\epsilon=\con(a,b)$. Clearly, the $\epsilon$-block of $a$ contains $b$. Applying \eqref{eqtxtznmrNpk} to this block, it follows that $\pair a b\in\balpha$ or $\pair a b\in\bbeta$. Thus, $\epsilon=\con(a,b)\leq\balpha$ or $\epsilon\leq\bbeta$, which contradicts Lemma~\ref{lemmacqzTrGmN}\eqref{lemmacqzTrGmNd}. This completes the proof of Lemma~\ref{lemmacxGnHjH}.
\end{proof}

Now, we are in the position to prove our main theorem.

\begin{proof}[Proof of Theorem~\ref{thmmain}]
We need to prove only that \ref{thmmain}\eqref{thmmaina} implies  \ref{thmmain}\eqref{thmmaind}; this follows from Lemmas~\ref{lemmadznPbWqbns} and \ref{lemmacxGnHjH}.
\end{proof}

The following proof relies heavily on Gr\"atzer~\cite{ggwith1}.

\begin{proof}[Proof of Proposition~\ref{propdhmhZ}] Let $D$ be finite distributive lattice; we can assume that $|D|>1$.
In order to prove that (a) implies (b), assume that $D$ is fully \aarep-representable. Then it is fully \aaerep-representable, and it follows from Theorem~\ref{thmmain} that $D$ is planar. We obtain from \eqref{eqtxt2cov} that $D$ has at most two coatoms, and we need to exclude the possibility that $D$ has exactly two coatoms. 
\begin{equation}
\parbox{6.3cm}{Suppose to the contrary that $D$ has two coatoms, $\balpha$ an $\bbeta$, and let $Q=D\setminus\set{1_D}$.}
\label{eqdkbhkzHbW}
\end{equation}
We can assume that 
$D=\Con(A)$ and $Q=\Princ(A)$ for some algebra $A$, since  $D$ is fully \aarep-representable. We claim that 
\begin{equation}
1_{\Con(A)} =\balpha\cup \bbeta.
\label{eqcfhnhnTgH}
\end{equation}
In order to see this, let $\pair x y\in A^2$. Since $1_{\Con(A)}\notin\Princ(A)$, we have that $\con(x,y)\neq 1_{\Con(A)}=1_D$. Since $D$ has only two coatoms,  $\con(x,y)\leq\balpha$ or $\con(x,y)\leq\bbeta$.  This means that $\pair x y\in\balpha$ or $\pair x y\in\bbeta$, implying \eqref{eqcfhnhnTgH}.

Next, let $U$ be arbitrary $\balpha$-block. Since $\balpha\neq 1_D=1_{\Con(A)}$, we can pick an element $x\in A\setminus U$. For every $y\in U$, we have that $\pair x y\notin \balpha$ since $x$ is outside the $\balpha$-block $U$ of $y$. Hence, \eqref{eqcfhnhnTgH} gives that $\pair x y\in \bbeta$ for all $y\in U$. So $U^2\subseteq \bbeta$ by transitivity, and we conclude that $\balpha\leq \bbeta$. This is a contradiction since $\balpha$ and $\bbeta$ are distinct coatoms. Consequently, (a) implies (b). 

Next, in order to prove that (b) implies (a), assume that $D$ has exactly one coatom.  Let $Q$ be a subset of $D$ satisfying \eqref{eqAtgjWrSD}. Since $1_D\in J(D)$, we have that $J_0(D)=J^+(D)$, whereby $Q$ satisfies \eqref{eqTzhjP}.  Hence, Proposition~\ref{propcLzGb} implies that the inclusion $Q\subseteq D$ is chain-representable. Thus, Gr\"atzer~\cite[Theorem 3]{ggwith1}, which has been recalled in Theorem~\ref{thmsofar}\eqref{thmsofare}, gives that the inclusion in question 
is represented by the principal congruences of a finite lattice. Consequently, $D$ is  fully
\aarep-representable and condition \ref{propdhmhZ}(a) holds, as required.
\end{proof}

\begin{proof}[Proof of Corollary~\ref{crollarczhTngPr}] For the sake of contradiction, suppose that $A$ is an algebra such that $\Princ(A)=V$. Every congruence $\bgamma\in\Con(A)$ is the join of all principal congruences in $\ideal \bgamma$, whereby $D:=\Con(A)$ is the four-element boolean lattice. Thus, $A$ represents the inclusion $V:=Q\subseteq D$. 
By \eqref{eqdkbhkzHbW}, this is a contradiction.
\end{proof}

\begin{proof}[Proof of Corollary~\ref{corolsrpBngN}]
Assume that $D$ is fully \aarep-representable. 
We can also assume that $|D|>1$.
By  Proposition~\ref{propdhmhZ}, $D$ is planar and $1_D\in J(D)$. Proposition~\ref{propcLzGb} gives that $D$ is fully chain-representable. Hence, $D$ is fully \flrep-representable by Theorem~\ref{thmsofar}\eqref{thmsofare}.
\end{proof}


\section{Representing a single inclusion by a finite algebra}\label{sectionSnglPrf}
\color{black}
The aim of this section is to prove Proposition~\ref{propositionbchrsnlsgR}.

\begin{figure}[ht] 
\centerline
{\includegraphics[scale=1.0]{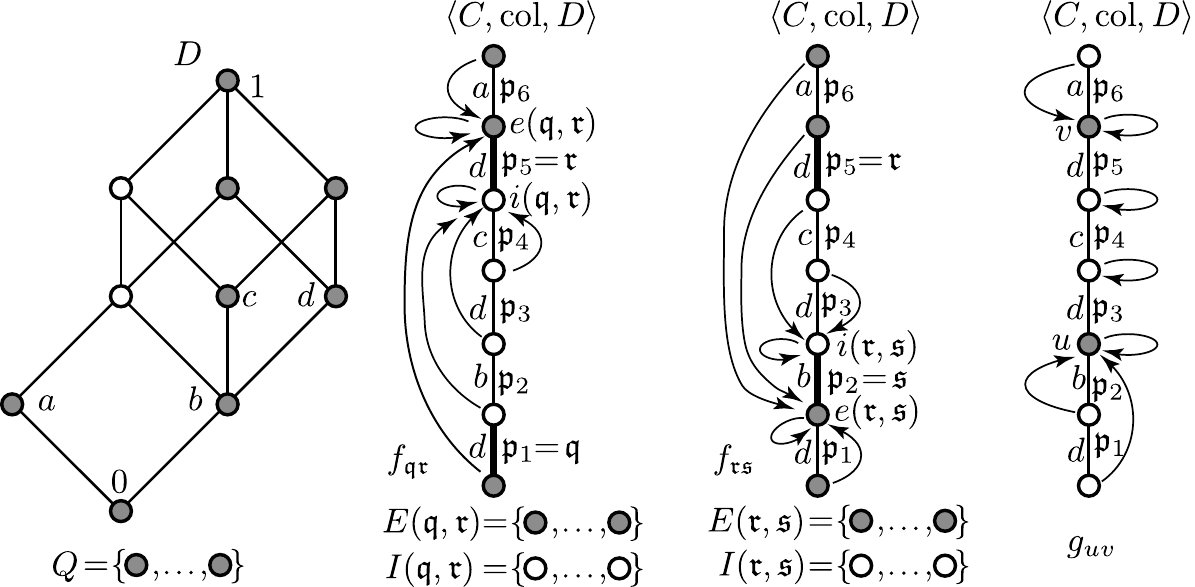}}
\caption{{From a $J(D)$-colored chain to an algebra}}
\label{figxtwo}
\end{figure}%

\begin{proof}[Proof of Proposition~\ref{propositionbchrsnlsgR}]
Assume that $D$ is a finite distributive lattice and 
a (finite) $J(D)$-colored chain  $\tripl C\szn D$ represents an inclusion $Q\subseteq D$. An example is given in Figure~\ref{figxtwo}, where $\tripl C\szn D$ is drawn thrice. 
In order to indicate generality, $|C|=7$ in the figure; note however that a five-element chain whose edges are colored with $c,b,d,a$, in this order, would also represent $Q\subseteq D$. 
In the figure, the colors are given by the labels $a$, \dots, $d$, and the edges of $C$, that is, the members of $\Prime(C)$, are $\inp_1,\dots, \inp_6$.  
In order to turn $C$ into an algebra, we are going to define two kinds of unary operations on $C$. 
First, for $u<v\in C$, we define a so-called \emph{contraction operation}  $g_{uv}\colon C\to C$ by the rule
\begin{equation*}
g_{uv}(x):= \begin{cases}
v,&\text{ if } x\geq v,\cr
x,&\text{ if } u\leq x\leq v,\text{ and}\cr
u,&\text{ if } x\leq u;
\end{cases}
\end{equation*}
see on the right of Figure~\ref{figxtwo}. The name  ``contraction'' comes from the straightforward fact that 
\begin{equation}
\parbox{10.5cm}{if $w<z\in C$ and $g_{uv}(w)\neq g_{uv}(z)$, then  $w\leq g_{uv}(w)< g_{uv}(z)\leq z$.}
\label{eqtxccngtrTn}
\end{equation}
Second, let 
$\inp,\inh\in \Prime(C)$ be \emph{distinct} edges of $C$. We define the \emph{interior set} $\belsh\inp\inh$ and the \emph{exterior set} $\kulsh\inp\inh$ as follows; see Figure~\ref{figxtwo} for  $\pair\inp\inh\in \set{\pair\inq\inr, \pair\inr\ins }$.
\begin{equation*}
\belsh\inp\inh:= \begin{cases}
[1_{\inp},0_{\inh}]&\text{ if }1_{\inp}\leq 0_{\inh},\cr
[1_{\inh},0_{\inp}]&\text{ if }1_{\inh}\leq 0_{\inp}
\end{cases}\qquad\text{and}\qquad \kulsh\inp\inh:=C\setminus \belsh\inp\inh.
\end{equation*} 
We also need to define the \emph{interior} and the   and the \emph{exterior target} elements $\belse\inp\inh$  and 
$\kulse\inp\inh$, respectively, as follows; see again the figure for  $\pair\inp\inh\in \set{\pair\inq\inr, \pair\inr\ins }$.
\begin{equation*}
\belse\inp\inh:= \begin{cases}
0_{\inh}&\text{ if }1_{\inp}\leq 0_{\inh},\cr
1_{\inh}&\text{ if }1_{\inh}\leq 0_{\inp}
\end{cases}
\qquad\text{and}\qquad
\kulse\inp\inh:= \begin{cases}
1_{\inh}&\text{ if }1_{\inp}\leq 0_{\inh},\cr
0_{\inh}&\text{ if }1_{\inh}\leq 0_{\inp}.
\end{cases} 
\end{equation*} 
With the notation given above, we define a unary \emph{forcing operation}
\begin{equation*}
f_{\inp\inh}\colon C\to C\qquad\text{by}\qquad
x\mapsto f_{\inp\inh}(x)
:= \begin{cases}
\belse\inp\inh &\text{ if } x\in \belsh\inp\inh,\cr
\kulse\inp\inh &\text{ if } x\in \kulsh\inp\inh.
\end{cases}
\end{equation*} 
The name ``forcing'' is motivated by the fact that the presence of this operation ``forces'' the inequality $\con(0_\inp,1_\inp)\geq \con(0_\inh,1_\inh)$. 
For $\pair\inp\inh\in \set{\pair\inq\inr, \pair\inr\ins }$, the forcing operation is  given in the middle part of Figure~\ref{figxtwo}.  
The unary $\alga$ algebra we need is defined by
\begin{equation}
\begin{aligned}
\alga=\tuple{C; \set{g_{uv}: u<v\in C}\cup 
\set{f_{\inp\inh}& : 
\inp\neq\inh\in\Prime(C) \cr
&\kern 5pt \text{ and }\szn(\inp)\geq\szn(\inh)\text{  holds in }D}}.
\end{aligned}
\label{eqalgDf}
\end{equation}
Although $\alga$ does not have a lattice reduct, we will frequently refer to the ordering of the chain $C$; for example, when speaking of intervals and edges. For $u,v\in C$, the \emph{symmetrized interval} $[u\wedge v, u\vee v]$ will be denoted by $\symi u v$. 
We claim that the map
\begin{equation}
\begin{aligned}
\phi\colon D&\to\Con(\alga),\quad\text{ defined by }\cr
x&\mapsto\set{\pair u v: \szn(\inp)\leq x\text{ for all } \inp\in \Prime(\symi u v)},
\end{aligned} \label{eqwdfPhhns}
\end{equation}
is a lattice isomorphism.

First, let $x\in D$; we need to show that $\phi(x)$ is a congruence of $\alga$. Since we use a symmetrized interval in \eqref{eqwdfPhhns}, $\phi(x)$ is reflexive and symmetric. The transitivity of $\phi(x)$ follows from the rule $\symi u w\subseteq \symi u v\cup \symi v w$. It is clear from \eqref{eqtxccngtrTn} that every contraction operation preserves $\phi(x)$. Let $f_{\inp\inh}$ be a forcing operation of $\alga$; this means that $\inp\neq\inh\in\Prime(C)$ and $\szn(\inp)\geq \szn(\inh)$ in $D$. In order to show that $f_{\inp\inh}$ preserves $\phi(x)$, assume that $\pair u v\in \phi(x)$ and $f_{\inp\inh}(u)\neq f_{\inp\inh}(v)$. Then 
$\set{f_{\inp\inh}(u),  f_{\inp\inh}(v)}=\set{0_\inh,1_\inh}$, whereby $\symi{f_{\inp\inh}(u)}{f_{\inp\inh}(v)}=\inh$. Hence, to show that 
$\tuple{f_{\inp\inh}(u),  f_{\inp\inh}(v)}\in\phi(x)$,
we need to show that $\szn(\inh)\leq x$. Since 
 $f_{\inp\inh}(u)\neq f_{\inp\inh}(v)$,  one of $u$ and $v$ is in $\kulsh\inp\inh$ while the other one is in $\belsh\inp\inh$. Hence, $u$ and $v$ in the chain $C$ are ``separated'' by at least one of the edges $\inp$ and $\inh$. If they are separated by $\inh$, then $\szn(\inh)\leq x$ by the definition of $\phi(x)$, as required. So we can assume that $u$ and $v$ are separated by $\inp$. Then $\szn(\inp)\leq x$ and, by \eqref{eqalgDf}, $\szn(\inp)\geq\szn(\inh)$. 
By transitivity, we obtain again that $\szn(\inh)\leq x$. This proves that $\phi(x)\in\Con(\alga)$, whereby \eqref{eqwdfPhhns} really defines a map from $D$ to $\Con(\alga)$.

Next, to prove the surjectivity of $\phi$, let $\Theta\in \Con(\alga)$. 
 Define  $x=\psi(\Theta)\in D$ by
\begin{equation}
x=\psi(\Theta):=\bigvee\set{\szn(\inp): \inp\in\Prime(C) \text{ and } \pair{0_\inp}{1_\inp}\in\Theta};
\label{eqdizTrNvq}
\end{equation} 
we are going to show that $\phi(x)=\Theta$. In order to do so, assume that $\pair w z\in \Theta$. Since both $\Theta$ and $\psi(x)$ are congruences of $\alga$, we can also assume that $w<z$. If $\inp\in \Prime([w,z])$, then 
$\pair{0_\inp}{ 1_\inp} = \pair{g_{0_\inp  1_\inp}(w)} {g_{0_\inp  1_\inp}(z)}\in\Theta$, whereby
$\szn(\inp)\leq x$. Since this holds for all $\inp\in\Prime(\symi w z )=\Prime([ w, z] )$, we obtain by \eqref{eqwdfPhhns} that $\pair w z\in\phi(x)$. Thus, $\Theta\leq \phi(x)$ holds in $\Con(\alga)$. Conversely, assume that $\pair w z\in\phi(x)$ and $w<z$.  Since $w<z$, 
\begin{equation}
\text{there are unique elements $t_i\in C$ 
such that $w=t_0\prec t_1\prec\dots\prec t_k=z$.}
\label{eqtxthtBfk}
\end{equation}
By \eqref{eqwdfPhhns},  $\szn([t_i,t_{i+1}])\leq x$ for all $i<k$. Thus, combining \eqref{eqdjlfkWsDbF} and \eqref{eqdizTrNvq}, we obtain an edge $\inp_i\in\Prime(C)$ such that $\szn([t_i,t_{i+1}])\leq \szn(\inp_i)$ and $\pair{0_{\inp_i}}{1_{\inp_i}}\in\Theta$, for every $i<k$. Applying the forcing operation  $f_{\inp_i\, [t_i,t_{i+1}]}$ componentwise to the pair $\pair{0_{\inp_i}}{1_{\inp_i}}$, we obtain $\pair{t_{i+1}}{t_{i}}$.  Since this operation preserves $\Theta$, we obtain that 
\begin{equation}
\pair{t_{i}}{t_{i+1}}\in \Theta,\quad\text{for all }i<k;
\label{eqsdevbmhbfHr}
\end{equation}
whereby $\pair w z=\pair{t_0}{t_k}\in\Theta$ by  the  transitivity of $\Theta$. Hence, $\phi(x)\leq \Theta$. Thus,
\begin{equation}
\text{for $x=\psi(\Theta)$ defined in \eqref{eqdizTrNvq}, $\,\,\phi(x) = \Theta$.}
\label{eqtxtzBmTnSx}
\end{equation}
This proves the surjectivity of $\phi$. 

It is clear from \eqref{eqwdfPhhns} that $\phi$ is order-preserving, that is, $x\leq y\in D$ implies that $\phi(x)\leq \phi(y)$. To show that $\phi$ is injective, assume that $x\neq y\in D$. Since $x=\bigvee(J(D)\cap \ideal x)$ and similarly for $y$, 
there exists an $a\in J(D)$ such that  $a\leq x$ and 
$a\nleq y$, or conversely. So, we can assume that  $a\leq x$ and $a\nleq y$. Since $\szn\colon \Prime(C)\to J(D)$ is a surjective map by Definition~\ref{defsznzSj}, there exists an edge $\inp\in\Prime(C)$ such that $\szn(\inp)=a$. Then $\pair{0_\inp}{1_\inp}$ is in $\phi(x)$ but not in $\phi(y)$. This proves the injectivity of $\phi$.

Now that we know that $\phi$ is a bijection, its clear by \eqref{eqdizTrNvq} and \eqref{eqtxtzBmTnSx} that the map 
\begin{equation}
\text{$\psi\colon\Con(\alga)\to D$, defined by 
$\Theta\mapsto x$ in 
 \eqref{eqdizTrNvq}, is the inverse of $\phi$.}
\label{eqtxtzbpstnvR}
\end{equation}
It is obvious by  \eqref{eqdizTrNvq} that $\psi$ is order-preserving. Consequently, $\phi$ is an order isomorphism and, thus, a lattice isomorphism.

We claim that,
\begin{equation}
\text{for every $\inp\in \Prime(C)$,\quad $\szn(\inp)=\psi(\con(0_\inp,1_\inp))$.}
\label{eqtxtzBkQkK}
\end{equation}
Since $\con(0_\inp,1_\inp)$ obviously collapses $\pair{0_\inp}{1_\inp}$, we obtain from 
 \eqref{eqdizTrNvq}  and \eqref{eqtxtzbpstnvR} that  
$\szn(\inp)\leq \psi(\con(0_\inp,1_\inp))$. Conversely, with the notation $a:=\szn(\inp)$, it is clear by \eqref{eqwdfPhhns} that $\phi(a)\ni \pair{0_\inp}{1_\inp}$.  Hence, $\phi(a)\geq \con(0_\inp,1_\inp)$. Using that $\psi$ is order-preserving, we obtain that $\szn(\inp)=a=\psi(\phi(a))\geq \psi(\con(0_\inp,1_\inp))$, proving \eqref{eqtxtzBkQkK}.

Next, let $w<z$ in $C$, and assume that $\pair w z\in\Theta=\phi(x)$ for some $x\in D$. Since \eqref{eqsdevbmhbfHr} holds for the elements $t_i$, see \eqref{eqtxthtBfk}, $\con(t_i,t_{i+1})\leq \Theta$ for all $i<k$. This also holds for $\Theta:=\con(w,z)$, whereby we obtain easily that 
\begin{equation}
\text{if $w<z\in C$,\quad then\quad$\con(w,z)=\bigvee_{i<k} \con(t_i,t_{i+1})$.}
\label{eqtxtzTrhSd}
\end{equation}

Now, we are in the position to show that $\phi(Q)=\Princ(\alga)$. Assume that $x\in Q$. Since 
 $Q=\srep(C,\szn, D)$, there are $w<z$ such that for the elements defined in \eqref{eqsdevbmhbfHr}, 
\begin{equation}
x =  \erep([w,z])  \overset{\eqref{eqsdhsdrepRtp}}=  \bigvee_{i<k}\szn([t_i,t_{i+1}]).
\label{eqdizTnPxCv}
\end{equation}
It follows from \eqref{eqtxtzBkQkK} that $\psi$, which is  a lattice isomorphism, maps the right-hand side of \eqref{eqtxtzTrhSd} to that of \eqref{eqdizTnPxCv}. Hence, $\psi(\con(w,z))=x$, which gives that $\phi(x)=\con(w,z)\in\Princ(\alga)$.
Consequently, $\phi(Q)\subseteq \Princ(\alga)$. 

Finally, we need to exclude that  this inclusion is proper. Suppose to the contrary that $\phi(Q)\subset \Princ(\alga)$, and pick a principal congruence from $\Princ(\alga)\setminus\phi(Q)$. Since $\phi\colon D\to\Con(\alga)$ is surjective, this congruence is of the form $\phi(x)$ where $x\notin Q$. Since $\phi(x)$ is a principal congruence,  there are $w<z\in C$ such that  $\phi(x)$ is of the form $\con(w,z)$, described in \eqref{eqtxtzTrhSd}. 
Taking the $\psi$-images of both sides of the equation in \eqref{eqtxtzTrhSd} and using \eqref{eqtxtzBkQkK} in the same way as before, we obtain the validity of \eqref{eqdizTnPxCv}.  Hence, $x=\erep([w,z])\in  \srep(C,\szn,D) =Q$, contradicting the choice of $x$. This proves the equality $\phi(Q)= \Princ(\alga)$ and completes the proof of Proposition~\ref{propositionbchrsnlsgR}.
\end{proof}

\color{black}

%
%
%
%
%
%
%

\end{document}